\numberwithin{equation}{section}
\numberwithin{figure}{section}
\begin{document}

\newtheorem{theorem}{Theorem}[section]
\newtheorem{definition}{Definition}[section]
\newtheorem{example}{Example}[section]
\newtheorem{corollary}{Corollary}[section]
\newtheorem{lemma}{Lemma}[section]
\newtheorem{proposition}{Proposition}[section]
\newtheorem{remark}{Remark}[section]
\newtheorem{assumption}{Assumption}[section]

\makeatletter
\renewenvironment{proof}[1][\proofname]{\par
  \pushQED{\qed}%
  \normalfont \topsep6\p@\@plus6\p@\relax
  \trivlist
  \item[\hskip\labelsep\bfseries #1\@addpunct{.}]\ignorespaces
}{%
  \popQED\endtrivlist\@endpefalse
}
\makeatother

\title{\bf Large Deviation Principle for Neutral Type Mckean-Vlasov Stochastic Differential Equations}
\author{
{\bf Zhaohang Wang$^{a}$, Junhao Hu$^{b}$, Chenggui Yuan$^{a}$}\\
\footnotesize{$^{a}$Department of Mathematics, Swansea University,
Bay Campus, SA1 8EN, UK}\\
\footnotesize{Email: zhaohang.wang@swansea.ac.uk,\quad c.yuan@swansea.ac.uk}\\
\footnotesize{$^{b}$School of Mathematics and Statistics, South-Central Minzu University, Wuhan 430074, China}\\
\footnotesize{Email: junhaohu74@163.com}\\
}

\date{}

\maketitle

\begin{abstract}
This paper investigates neutral-type McKean-Vlasov stochastic differential equations in which  the drift and diffusion coefficients  depend on both the segment process and its distribution. Under a one-sided Lipschitz condition on the drift coefficient, we establish a Freidlin-Wentzell-type large deviation principle for the solution process by using the extended contraction principle combined with an exponential approximation technique. Our results extend existing large deviation principles for McKean-Vlasov equations to the neutral case.

\medskip \noindent
{\small\bf Key words: } Mckean-Vlasov SDEs,  large deviation principle, rate function, exponential equivalent

\noindent
{\bf AMS Subject Classification}:    60F10, 60H10, 34K26
\end{abstract}

\section{Introduction}
Stochastic differential equations (SDEs) play a fundamental role in modeling complex dynamical systems subject to random perturbations, with applications ranging from physics and biology to economics and engineering. In recent decades, increasing attention has been paid to distribution-dependent stochastic differential equations(DDSDEs), also known as McKean–Vlasov SDEs (MVSDEs), in which the coefficients depend not only on the current state of the process, but also on its distribution. Such equations naturally arise as the mean-field limits of weakly interacting particle systems, and they provide powerful mathematical models for describing collective dynamics in large populations. Classical references on the subject include McKean’s pioneering work on nonlinear Markov processes \cite{MK67}, and subsequent developments by Sznitman \cite{S91} and others. More recently, MVSDEs have found widespread applications in areas such as statistical physics, quantitative finance, control theory, and machine learning \cite{CD18}. For MVSDEs, substantial progress has been made regarding the well-posedness of solutions, the regularity and ergodic properties of the associated transition probabilities, as well as the propagation of chaos(see, e.g., \cite{WFY18, WFY19, MR21, BJH21}). 

Within the broad study of SDEs, an important line of research concerns large deviation principles (LDPs), which provide asymptotic estimates for the probabilities of rare events and quantify the exponential rate at which such probabilities decay. In the study of LDPs for stochastic processes, two main approaches have been developed. The first is the classical method initiated by Freidlin and Wentzell, which relies on an extended contraction principle and exponential approximation argument; see \cite{F70, ZTS06, M84, R19, SYQ20, HG16,WY24}. For example, \cite{F70} fisrt introduced the LDPs for SDEs. In subsequent studies, researchers have also considered stochastic differential equations with delay (also called stochastic functional differential equations), where the future evolution of the system depends not only on its present state but also on its past trajectory. Such equations are of great importance in modeling systems with memory effects, which frequently occur in fields such as population dynamics, engineering, and finance. For instance, \cite{ZTS06} and \cite{M84} developed the LDPs for SDEs with constant delay. \cite{SYQ20} established the LDPs for neutral stochastic functional differential equations(SFDEs) with finite delay. More recently, \cite{WY24} established the LDPs for regime-switching SDEs with infinite delay. Moreover, see \cite{R19} for the LDPs for MVSDEs.

Besides the classical approach, another powerful framework has been developed in recent years, namely the weak convergence method introduced in \cite{Dup97}. Based on a variational representation of positive functionals of Brownian motion introduced in \cite{Dup00}, this method allows the LDPs to be reformulated as a stochastic control problem. By establishing weak convergence of controlled processes, one can then derive the desired LDPs, see\cite{Dup08, Dup11, Dup16, Dup19}. For instance, \cite{ML13} and \cite{CA14} introduced the LDPs for SDEs with constant delay.
\cite{BJH15} established the LDPs for a class of neutral SFDEs driven with jumps.  
also see \cite{HL21, AR22, LS22, SYL23} for the LDPs for MVSDEs.

Although significant progress has been made in applying the extended contraction principle to establish LDPs for various classes of stochastic equations, certain important settings remain largely unexplored. For instance, \cite{SYQ20} investigated the LDP for neutral SFDEs under a one-sided Lipschitz condition on the drift, whereas \cite{R19} proved that the solutions of MVSDEs satisfy the LDP in the uniform topology on the path space. Both works rely on the extended contraction principle, which has proved to be a powerful tool in this context. However, to the best of our knowledge, there are no results available on the LDP for neutral MVSDEs obtained via the extended contraction principle. The purpose of this paper is to fill this gap by establishing sample-path LDPs for solutions to neutral MVSDEs (\ref{1.1}) under the assumption (\ref{A1.1}) stated below. Our results thus extend the scope of the extended contraction principle to a new class of distribution-dependent non-Markovian systems, combining neutral-type dynamics with mean-field interactions.

To establish the LDP for the class of MVSDEs considered in this paper, we first introduce the basic notations will be used throughout the paper. For any positive integer $d$, let $(\mathbb{R}^d, \langle\cdot,\cdot\rangle, |\cdot|)$ be the Euclidean space of $d$ dimensions with inner product $\langle\cdot,\cdot\rangle$ and induced norm $|\cdot|$. Let $\mathbb{M}^{d\times m}$ be the collection of all $d\times m$ matrices with the Hilbert-Schimidt norm $\|\sigma\|_{HS}=\sqrt{{\rm tr}(\sigma^*\sigma)}$, where $\sigma^*$ is the transpose of the matrix $\sigma$. For any fixed real number $\tau>0$, denote by $\mathscr{C}=C([-\tau,0];\mathbb{R}^d)$ the space of all continuous functions from $[-\tau,0]$ to $\mathbb{R}^d$, which is a Polish space under the uniform norm $\|f\|_\infty=\sup_{t\in[-\tau,0]}|f(t)|$ for any $f\in\mathscr{C}$. Denote $H([0,T];\mathbb{R}^m):=\{\varphi(s):[0,T]\to\mathbb{R}^m;\, \varphi \mbox{ is absolutely continuous with } \varphi(0)=0 \mbox{ and } \int_0^T|\dot{\varphi}(s)|^2\,ds<\infty\Big{\}}$, which is a Banach space equipped with the norm $ \|f\|_H=(\int_0^T| \dot{f}(s)|^2\, ds)^\frac{1}{2}.$ Let $\mathscr{P}_2(\mathscr{C})$ be a subset  of all probability measures $\mathscr{P}(\mathscr{C})$ on $\mathscr{C}$ with finite second moments, i.e.
$$
\mathscr{P}_2(\mathscr{C})=\Big{\{}\mu\in\mathscr{P}(\mathscr{C}):\mu(\|\cdot\|_\infty^2)=\int_\mathscr{C}\|\xi\|_\infty^2\,\mu(d\xi)<\infty\Big{\}},
$$
which is a Polish space under the Wasserstein distance
$$
\mathbb{W}_2(\mu,\nu)=\inf_{\pi\in\Gamma(\mu,\nu)}\Big(\int_{\mathscr{C}\times\mathscr{C}}\|\xi-\eta\|_\infty^2\,\pi(d\xi,d\eta)\Big)^\frac{1}{2},
$$
where $\Gamma(\mu,\nu)$ is the set of all couplings of probability measures $\mu$ and $\nu$.
For any fixed time horizon $T>0$, we consider the following MVSDE
\begin{equation}\label{1.1}
\begin{cases}
    d(X^\epsilon(t)-D(X^\epsilon_t))=b(X^\epsilon_t, \mathscr{L}_{X^\epsilon_t})\,dt+\sqrt{\epsilon}\sigma(X^\epsilon_t, \mathscr{L}_{X^\epsilon_t})\,dW(t),\quad t\in[0,T], \\
       X^\epsilon_0=\xi\in\mathscr{C}.
\end{cases}
\end{equation}
where $\{X^\epsilon_t\}_{t\in[0,T]}$ is the corresponding segment process of $\{X^\epsilon(t)\}_{t\in[0,T]}$, i.e.
$$
X^\epsilon_t(
\theta)=X^\epsilon(t+\theta),\quad \theta\in[-\tau,0],
$$
which is a continuous function in $\mathscr{C}$ for any $t\in[0,T]$ and $\mathscr{L}_{X^\epsilon_t}$ is the law of $X^\epsilon_t$. The coefficients $D:\mathscr{C}\to\mathbb{R}^d$, $b:\mathscr{C}\times\mathscr{P}_2(\mathscr{C})\to\mathbb{R}^d$ and $\sigma:\mathscr{C}\times\mathscr{P}_2(\mathscr{C})\to\mathbb{M}^{d\times m}$ are measurable functions.  $\epsilon\in[0,1]$ is a small parameter that controls the strength of the stochastic perturbation, and the asymptotic behavior as $\epsilon\to 0$ will be of particular interest and $\{W(t)\}_{t\ge0}$ is a $m$-dimensional Brownian motion  defined in a filtered complete probability space $(\Omega, \mathscr{F},\{\mathscr{F}_t\}_{t\ge0},\mathbb{P})$.

It is hard to directly apply the extended contraction principle Lemma \ref{L2.1} to prove that the law of the solution to system (\ref{1.1}) satisfies the LDP on $C([-\tau,T];\mathbb{R}^d)$ because the coefficients depend on its own distribution. To overcome this difficulty, our methodology is to first construct an approximation of $\mathscr{L}_{X^\epsilon_t}$. For this purpose, we introduce the following ordinary differential equation (ODE): 
\begin{equation}
      d(X^0(t)-D(X^0_t))=b(X^0_t, \delta_{X^0_t})dt,\quad X^0_0=\xi, \quad t\in[0,T],
\end{equation}
where $\delta_{X^0_t}$ is the Dirac measure of $X^0_t$. 
In the small noise limit $\varepsilon \to 0$ for the system (\ref{1.1}), the diffusion term vanishes, and the equation is reduced to an ordinary differential equation of the above form. Under Assumptions (\ref{A1.1}), the above ODE admits a unique solution. Note that $\mathscr{L}_{X^\epsilon_t}$ converges in distribution to $\delta_{X^0_t}$ as $\varepsilon \to 0$. Motivated by this observation, we replace  $\mathscr{L}_{X^\epsilon_t}$ in equation (\ref{1.1}) by $\delta_{X^0_t}$, which leads to the following approximate SDE.
\begin{equation}
\begin{cases}
            d(Y^\epsilon(t)-D(Y^\epsilon_t))=b(Y^\epsilon_t, \delta_{X^0_t})\,dt+\sqrt{\epsilon}\sigma(Y^\epsilon_t, \delta_{X^0_t})\,dW(t),\quad t\in[0,T],\\
            Y^\epsilon_0=\xi.
\end{cases}
\end{equation}
 Our approach to prove the LDP of the law of $X^\varepsilon(\cdot)$ proceeds in two main steps.  The first step is to establish an LDP for the law of $Y^{\varepsilon}(\cdot)$ by applying the approximation techniques developed later, see Theorem \ref{T2.1}(i). Next, we show that $X^\varepsilon(t)$ and $Y^\varepsilon(t)$ are exponentially equivalent. Therefore, by the stability of the LDP under exponential equivalence, see \cite{DZ}, it follows that the law of $X^\varepsilon(\cdot)$ satisfies the same LDP, see Theorem \ref{T2.1}(ii).

The Assumptions on the drift and diffusion coefficients are as follows.
\begin{assumption}\label{A1.1}
     For any $\xi, \eta \in \mathscr{C}$ and $\mu, \nu \in \mathscr{P}_2(\mathscr{C})$, we impose the following conditions.
\begin{itemize}
  \item[(A1)] There is a constant $\alpha \in (0, 1)$ such that
  \[
  |D(\xi) - D(\eta)| \leq \alpha \|\xi - \eta\|_\infty, \quad D(0) = 0.
  \]

  \item[(A2)] The functions $b$ and $\sigma$ are bounded on bounded subsets of $\mathscr{C} \times \mathscr{P}_2(\mathscr{C})$. Moreover, there exists a constant $L > 0$ such that
  \[
  2 \langle \xi(0) - \eta(0) - (D(\xi) - D(\eta)), b(\xi, \mu) - b(\eta, \nu) \rangle 
  \leq L \left(\|\xi - \eta\|_\infty^2 +\mathbb{W}_2(\mu, \nu)^2 \right),
  \]
  \[
  \|\sigma(\xi, \mu) - \sigma(\eta, \nu)\|_{\mathrm{HS}}^2 
  \leq L \left( \|\xi - \eta\|_\infty^2 + \mathbb{W}_2(\mu, \nu)^2 \right).
  \]
  \item [(A3)] There exists a constant $L_1>0$ such that
  \[
  |b(0,\mu)|^2\vee\|\sigma(0,\mu)\|_{HS}^2\le L_1(1+\mu(\|\cdot\|_\infty^2)).
  \]
\end{itemize}
\end{assumption}

\begin{remark}
    From (A1)-(A3), it follows that for any $\xi \in \mathscr{C}$ and $\mu \in \mathscr{P}_2(\mathscr{C})$
\begin{equation}\label{1.2}
2\langle \xi(0) - D(\xi),\, b(\xi,\mu) \rangle \vee\|\sigma(\xi, \mu) \|_{\mathrm{HS}}^2 \leq L_2(1 + \|\xi\|^2_{\infty}+\mu(\|\cdot\|_\infty^2)),  
 \quad|D(\xi)| \leq \alpha \|\xi\|_{\infty}, 
\end{equation}
 where $L_2=\max{\{L+(1+\alpha)^2, L+L_1\}}$.
\end{remark}
The remainder of the paper is organized as follows. In Section 2, we present some preliminary lemmas and state the main results of the paper. Section 3 is devoted to the proof of Theorem \ref{T2.1} (i). We begin by applying Lemma \ref{L2.1} to show that the law of $Y^\epsilon(\cdot)$ satisfies the LDP under Assumption \ref{A1.1}, together with the additional boundedness of $b$ and $\sigma$. Subsequently, by removing the boundedness condition, we complete the proof of Theorem \ref{T2.1}(i) by using the definition of LDP. Finally, in Section 4, we establish Theorem \ref{T2.1}(ii), which constitutes the main result of this work.

\section{Preliminaries and Main Results}
We begin with the definition of LDP.
\begin{definition}[Large Deviation Principle] 
    Let $\{\mu_\varepsilon\}$ be a family of probability measures defined on a Polish space $\mathscr{X}$. If there exists a lower semicontinuous function $I:\mathscr{X}\to[0,\infty]$ (i.e. the level set $I^{-1}([0,c])$ is a closed subset of $\mathscr{X}$ for any $c\in[0,\infty)$) such that 
    \begin{itemize}
    \item[(i)] For any closed subset \( C \subset \mathscr{X} \),
    \begin{equation}
    \limsup_{\varepsilon \to 0} \varepsilon \log \mu_\varepsilon(C) \le - \inf_{x \in C} I(x).
    \end{equation}
    
    \item[(ii)] For any open subset \( G \subset \mathscr{X} \),
    \begin{equation}
    \liminf_{\varepsilon \to 0} \varepsilon \log \mu_\varepsilon(G) \ge - \inf_{x \in G}I(x).
    \end{equation}
\end{itemize}
Then $I$ is called a rate function , and $\{\mu_\varepsilon\}$ is said to satisfy the LDP with rate function $I$. Moreover, if all level sets are compact sets in $\mathscr{X}$, then $I$ is called a good rate function. 
\end{definition}
According to the Schilder theorem \cite{DZ}, the law of Brownian motion $\sqrt{\epsilon}W(\cdot)$ on $C([0, T]; \mathbb{R}^m)$, denoted by $\nu_\epsilon$, satisfies the LDP with the rate function $I(\varphi)$ defined as follows.
\begin{equation}\label{2.00}
    I(\varphi)= \left\{ \begin{array}{rcl}
\frac{1}{2}\int_0^T|\dot{\varphi}(s)|^2\,ds, 
& if \quad \varphi\in H([0,T];\mathbb{R}^m); \\ 
+\infty,  & otherwise.
\end{array}\right.
\end{equation}
To establish the LDP for the law of the solution to system (\ref{1.1}), we begin by stating a series of lemmas that form the foundation for our main results. Firstly, we recall an extended contraction principle in \cite{DZ}, which establishes that the LDP satisfied by one sequence of probability measures can be transferred to another via a sequence of continuous mappings.

\begin{lemma}\label{L2.1}
Let $\mathbb{X}$ be a Hausdorff topological space  and $(\mathbb{Y}, d)$ be a metric space.  Let $\{\nu_\varepsilon\}$ be a family of probability measures that satisfies the LDP with a good rate function $I$  in $\mathbb{X}$, and for $n = 1, 2, \dots$, let $h_n : \mathbb{X} \to \mathbb{Y}$ be continuous functions in $(\mathbb{Y}, d)$. Assume there exists a measurable map $h: \mathbb{X} \to \mathbb{Y}$ such that for every  $r < \infty$,
\begin{equation}
\label{eq:2.2}
\limsup_{n \to \infty} \sup_{\{x : I(x) \leq r\}} d(h_n(x), h(x)) = 0.
\end{equation}
Then any family of probability measures $\{\widetilde{\nu}_\varepsilon\}$ for which $\{\nu_\varepsilon \circ h_n^{-1}\}$ are exponentially good approximations satisfies the LDP in $\mathbb{Y}$ with the good rate function
\[
\widetilde{I}(y) = \inf\{ I(x) \;|\; y = h(x) \}.
\]
\end{lemma}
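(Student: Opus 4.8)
The plan is to deduce the statement from the classical contraction principle together with the exponentially good approximation machinery, the only genuinely delicate point being the identification of the limiting rate function. Write $K_r:=\{x\in\mathbb X:I(x)\le r\}$; since $I$ is a good rate function, each $K_r$ is compact. Fix, as supplied by the definition of exponentially good approximations, a coupling on a common probability space of $Y_{n,\varepsilon}\sim\nu_\varepsilon\circ h_n^{-1}$ and $Y_\varepsilon\sim\widetilde\nu_\varepsilon$ satisfying $\limsup_{n\to\infty}\limsup_{\varepsilon\to0}\varepsilon\log\mathbb P(d(Y_{n,\varepsilon},Y_\varepsilon)>\delta)=-\infty$ for every $\delta>0$. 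The structural fact I would extract from \eqref{eq:2.2} at the outset is that on each $K_r$ the continuous maps $h_n$ converge uniformly to $h$, so that $h|_{K_r}$ is continuous and $h(K_r)$ is compact. From this, $\widetilde I$ is a good rate function and its defining infimum is attained: if $\widetilde I(y)=r<\infty$, choose $x_k$ with $h(x_k)=y$ and $I(x_k)\downarrow r$; eventually $x_k\in K_{r+1}$, so along a subsequence $x_k\to x^\ast$ with $I(x^\ast)\le r$ by lower semicontinuity, and $h(x_k)\to h(x^\ast)$ by continuity of $h$ on $K_{r+1}$, whence $h(x^\ast)=y$, $\widetilde I(y)=I(x^\ast)$, and $\{\widetilde I\le r\}=h(K_r)$ is compact.

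For the upper bound I would argue directly. Given a closed set $F\subset\mathbb Y$ and $\delta>0$, put $F^\delta=\{y:d(y,F)\le\delta\}$; the coupling gives $\widetilde\nu_\varepsilon(F)\le\nu_\varepsilon(h_n^{-1}(F^\delta))+\mathbb P(d(Y_{n,\varepsilon},Y_\varepsilon)>\delta)$, and since $h_n^{-1}(F^\delta)$ is closed (as $h_n$ is continuous) the LDP upper bound for $\{\nu_\varepsilon\}$ applies to it. Letting $\varepsilon\to0$ and then $n\to\infty$ kills the error term and yields
\[
\limsup_{\varepsilon\to0}\varepsilon\log\widetilde\nu_\varepsilon(F)\le-\liminf_{n\to\infty}\inf\{I(x):h_n(x)\in F^\delta\}=:-c_\delta .
\]
The crux is to show $\sup_{\delta>0}c_\delta\ge\inf_F\widetilde I$. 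Supposing to the contrary that $c_0:=\sup_{\delta>0}c_\delta<\inf_F\widetilde I$ (and so, after discarding the trivial case $c_0=\infty$, $c_\delta<\infty$ for all $\delta$), I would, for each such $\delta$, extract $x_n$ with $h_n(x_n)\in F^\delta$ and $I(x_n)\le c_\delta+o(1)$; as $\{x_n\}\subset K_{c_0+1}$ is relatively compact I pass to a subsequential limit $x^\ast_\delta\in K_{c_0}$ and use $d(h_n(x_n),h(x^\ast_\delta))\le\sup_{K_{c_0+1}}d(h_n,h)+d(h(x_n),h(x^\ast_\delta))\to0$ (by \eqref{eq:2.2} and continuity of $h$ on $K_{c_0+1}$) to get $d(h(x^\ast_\delta),F)\le\delta$; finally, taking $\delta=1/k$ and using compactness of $K_{c_0+1}$ once more, any limit point $x^{\ast\ast}\in K_{c_0}$ satisfies $h(x^{\ast\ast})\in F$, so $\inf_F\widetilde I\le c_0$, a contradiction. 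Hence $\limsup_{\varepsilon\to0}\varepsilon\log\widetilde\nu_\varepsilon(F)\le-\inf_F\widetilde I$.

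For the lower bound the scheme is symmetric. For open $G\subset\mathbb Y$ and $\delta>0$ let $G^{-\delta}=\{y:d(y,G^c)>\delta\}$, which is open, so $h_n^{-1}(G^{-\delta})$ is open; the coupling gives $\widetilde\nu_\varepsilon(G)\ge\nu_\varepsilon(h_n^{-1}(G^{-\delta}))-\mathbb P(d(Y_{n,\varepsilon},Y_\varepsilon)>\delta)$. Because the exponential rate of $\mathbb P(d(Y_{n,\varepsilon},Y_\varepsilon)>\delta)$ tends to $-\infty$ as $n\to\infty$ while the LDP lower bound for $\{\nu_\varepsilon\}$ controls $\nu_\varepsilon(h_n^{-1}(G^{-\delta}))$ only at a finite exponential rate, for suitably large $n$ the subtracted term is negligible compared with the first, and one obtains $\liminf_{\varepsilon\to0}\varepsilon\log\widetilde\nu_\varepsilon(G)\ge-\liminf_{n\to\infty}\inf\{I(x):h_n(x)\in G^{-\delta}\}$, valid for every $\delta>0$. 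Now for any $x$ with $I(x)<\infty$ and $h(x)\in G$, openness of $G$ gives $\rho>0$ with $d(h(x),G^c)\ge\rho$, and since $h_n(x)\to h(x)$ (as $x\in K_{I(x)}$ and $\sup_{K_{I(x)}}d(h_n,h)\to0$), for large $n$ one has $h_n(x)\in G^{-\rho/2}$, hence $\liminf_{n}\inf\{I(x'):h_n(x')\in G^{-\rho/2}\}\le I(x)$; applying the displayed inequality with $\delta=\rho/2$ and taking the supremum over such $x$ yields $\liminf_{\varepsilon\to0}\varepsilon\log\widetilde\nu_\varepsilon(G)\ge-\inf_G\widetilde I$.

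The step I expect to be the main obstacle is the rate-function identification in the upper bound --- exchanging the limit $n\to\infty$ with the preimage operation under $h_n$ and then contracting the $\delta$-fattening --- since that is the only place where one genuinely needs both ingredients of the hypothesis simultaneously: the compactness of the level sets $K_r$ (from $I$ being a good rate function) to extract convergent subsequences, and the uniformity in \eqref{eq:2.2} to push $h_n$-images to $h$-images along those subsequences. Everything else --- closedness of $h_n^{-1}(F^\delta)$ and openness of $h_n^{-1}(G^{-\delta})$, the handling of the trivial infinite cases, and the bookkeeping of the nested limits $\varepsilon\to0$, $n\to\infty$, $\delta\to0$ together with the comparison showing the subtracted probability is negligible in the lower bound --- is routine.
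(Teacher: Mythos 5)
The paper does not prove Lemma \ref{L2.1}: it is quoted verbatim (up to notation) as a tool from Dembo and Zeitouni \cite{DZ}, where it appears as the ``extended contraction principle'' (Theorem 4.2.23 in the second edition). So there is no in-paper proof against which to compare your argument; any comparison must be with the textbook.

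Your argument is, as far as I can tell, correct. It differs in organization from the Dembo--Zeitouni proof: there, one first proves an abstract result (their Theorem 4.2.16) that exponentially good approximations $\{\nu_\varepsilon\circ h_n^{-1}\}$ of $\{\widetilde\nu_\varepsilon\}$, each satisfying an LDP with rate $I_n$, force a weak LDP for $\{\widetilde\nu_\varepsilon\}$ with rate $J(y)=\sup_{\delta>0}\liminf_{n}\inf_{B(y,\delta)}I_n(y')$, upgraded to a full LDP when $J$ is good and $\inf_F J\le\limsup_n\inf_F I_n$ for closed $F$; Theorem 4.2.23 then specializes via the ordinary contraction principle to get $I_n(y)=\inf\{I(x):h_n(x)=y\}$ and identifies $J=\widetilde I$ using exactly the two ingredients you emphasize, compactness of level sets of $I$ and the uniformity in \eqref{eq:2.2}. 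You instead bypass the intermediate rate function $J$ and prove the upper and lower bounds for $\{\widetilde\nu_\varepsilon\}$ directly through the coupling, folding the rate-function identification into the upper-bound argument via the compactness-plus-uniform-convergence diagonalization ($\delta=1/k$) you describe. What the textbook route buys is reusability of the abstract Theorem 4.2.16 for other approximation schemes; what yours buys is a shorter, self-contained derivation for this particular application, and it correctly locates the only genuinely delicate step (the interchange of $n\to\infty$ with the preimage/fattening operations in the upper bound). Two small points worth being explicit about if you write this up: the continuity of $h$ on each $K_r$ (hence compactness of $h(K_r)$, hence goodness of $\widetilde I$ and attainment of its infimum) is itself a consequence of \eqref{eq:2.2} and should be stated as a preliminary observation before it is used; and in the lower bound, the display $\liminf_\varepsilon\varepsilon\log\widetilde\nu_\varepsilon(G)\ge-\liminf_n\inf\{I(x):h_n(x)\in G^{-\delta}\}$ requires a short quantitative justification (choose $\eta>0$, take $n$ large enough that the error rate $b_n<-\alpha_n-\eta$ where $\alpha_n$ is the corresponding infimum, conclude for small $\varepsilon$ that the subtracted term is at most half the main term, then send $\eta\to0$); you flag this as ``routine,'' which it is, but it is precisely the kind of step a referee would want spelled out.
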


To control the probability of large deviations of an It\^o process with bounded coefficients over a finite interval, we employ the following exponential estimate. For more details, see \cite{S}.
\begin{lemma}\label{L2.2}
Let $\alpha : [0, \infty) \times \Omega \to \mathbb{M}^{d\times m}$ and $\beta : [0, \infty) \times \Omega \to \mathbb{R}^d$ be $(\mathcal{F}_t)_{t \geq 0}$-progressively measurable processes. Assume that $\|\alpha(\cdot)\|_{\text{HS}} \leq A$ and $|\beta(\cdot)| \leq B$. Define
\[
\xi(t) := \int_0^t \alpha(s)\,dW(s) + \int_0^t \beta(s)\,ds, \quad \text{for } t \geq 0.
\]
Let $T > 0$ and $R > 0$ satisfy $\sqrt{d}\,BT < R$. Then
\begin{equation}
\label{eq:2.3}
\mathbb{P} \left( \sup_{0 \leq t \leq T} |\xi(t)| \geq R \right)
\leq 2d \exp\left( -\frac{(R - \sqrt{d}\,BT)^2}{2A^2 d T} \right).
\end{equation}
\end{lemma}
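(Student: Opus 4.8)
The plan is to reduce the $d$-dimensional estimate to a one-dimensional exponential martingale inequality by a coordinatewise union bound, separate the bounded-variation drift part from the stochastic integral, and then apply the classical exponential supermartingale estimate to each component of the martingale part.

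First I would write $\xi(t) = M(t) + V(t)$ with $M(t) = \int_0^t \alpha(s)\,dW(s)$ and $V(t) = \int_0^t \beta(s)\,ds$. Since $|\beta(s)| \le B$, the drift satisfies $|V_i(t)| \le \int_0^t |\beta(s)|\,ds \le BT$ for each coordinate $i = 1,\dots,d$. On the event $\{\sup_{t\le T}|\xi(t)| \ge R\}$ there must exist a coordinate $i$ with $\sup_{t\le T}|\xi_i(t)| \ge R/\sqrt{d}$, because $|\xi|^2 = \sum_{i=1}^d |\xi_i|^2$; hence on that event $\sup_{t\le T}|M_i(t)| \ge R/\sqrt{d} - BT =: R'$, and $R' > 0$ precisely by the hypothesis $\sqrt{d}\,BT < R$. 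A union bound then gives
\[
\mathbb{P}\Big(\sup_{0\le t\le T}|\xi(t)| \ge R\Big) \le \sum_{i=1}^d \mathbb{P}\Big(\sup_{0\le t\le T}|M_i(t)| \ge R'\Big).
\]

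Next, each $M_i(t) = \int_0^t \alpha_i(s)\,dW(s)$, where $\alpha_i$ is the $i$-th row of $\alpha$, is a continuous local martingale with quadratic variation $\langle M_i\rangle_t = \int_0^t |\alpha_i(s)|^2\,ds \le A^2 t \le A^2 T$, using $|\alpha_i(s)| \le \|\alpha(s)\|_{\mathrm{HS}} \le A$. I would then invoke the standard fact that, for $\lambda > 0$, the process $\exp\big(\lambda M_i(t) - \tfrac{\lambda^2}{2}\langle M_i\rangle_t\big)$ is a nonnegative supermartingale, so that Doob's maximal inequality yields $\mathbb{P}\big(\sup_{t\le T} M_i(t) \ge R'\big) \le \exp\big(-\lambda R' + \tfrac{\lambda^2}{2}A^2 T\big)$; optimizing over $\lambda$ (take $\lambda = R'/(A^2T)$) gives $\exp\big(-(R')^2/(2A^2T)\big)$. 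Applying the same argument to $-M_i$ and adding, $\mathbb{P}\big(\sup_{t\le T}|M_i(t)| \ge R'\big) \le 2\exp\big(-(R')^2/(2A^2T)\big)$. Substituting $R' = (R - \sqrt{d}\,BT)/\sqrt{d}$, so that $(R')^2 = (R - \sqrt{d}\,BT)^2/d$, and summing over the $d$ coordinates produces exactly
\[
\mathbb{P}\Big(\sup_{0\le t\le T}|\xi(t)| \ge R\Big) \le 2d\exp\Big(-\frac{(R - \sqrt{d}\,BT)^2}{2A^2 dT}\Big).
\]

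The only genuinely delicate point is the exponential supermartingale step: one must justify that $\exp\big(\lambda M_i(t) - \tfrac{\lambda^2}{2}\langle M_i\rangle_t\big)$ is a supermartingale even though $M_i$ is a priori only a local martingale. This is handled by localizing along a reducing sequence of stopping times — where the stopped process is a genuine martingale with constant expectation $1$ — and then passing to the limit via Fatou's lemma; the uniform bound $\langle M_i\rangle_T \le A^2 T$ ensures there is no integrability obstruction. Everything else — the coordinatewise reduction, the deterministic drift estimate, and the optimization over $\lambda$ — is routine bookkeeping. This estimate is classical; see \cite{S}.
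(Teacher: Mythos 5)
Your proof is correct. The paper does not prove Lemma \ref{L2.2} at all --- it simply cites Stroock \cite{S} --- and your argument is a faithful reconstruction of the classical exponential-supermartingale proof found there: separate the bounded drift $V$ (contributing at most $BT$ per coordinate), reduce to a single coordinate $M_i$ via $\sup|\xi|\ge R \Rightarrow \sup|\xi_i|\ge R/\sqrt d$ for some $i$, bound $\langle M_i\rangle_T\le A^2T$, and apply Doob's inequality to $\exp(\lambda M_i - \tfrac{\lambda^2}{2}\langle M_i\rangle)$ with the optimal $\lambda = R'/(A^2T)$, doubling for $\pm M_i$ and summing over coordinates to get the factor $2d$. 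The localization-plus-Fatou remark correctly handles the supermartingale property of the exponential local martingale, so there is no gap.
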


Following the approach in \cite{HY21}, it can be shown that system (\ref{1.1}) admits a unique solution $\{X^\epsilon(t)\}_{t\in[-\tau,T]}$ under assumptions (A1)-(A3).
Moreover, we can derive the following uniform moment estimate for the process $\{X^\epsilon(t)\}_{t\in[-\tau,T]}$
\begin{lemma}
     Under assumptions (A1)-(A3), it holds that
    \begin{align}\label{1.3}
      & \mathbb{E}\big[\sup_{0\le t \le T}\|X^\epsilon_t\|_\infty^2\big]\nonumber\\
       \le&\frac{(2\alpha^2+3\alpha+3)\|\xi\|_\infty^2+2L_2T(1+65\epsilon)}{(1-\alpha)^2}\exp\Big\{\frac{4L_2T(1+65\epsilon)}{(1-\alpha)^2}\Big\}=:L_3(\epsilon).
    \end{align}
\end{lemma}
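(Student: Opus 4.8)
The statement is a standard a priori second-moment bound, and I would prove it by Itô's formula applied to the neutral part of the equation, combined with the one-sided estimate \eqref{1.2} and Gronwall's lemma. Write $Z^\epsilon(t):=X^\epsilon(t)-D(X^\epsilon_t)$, so that $Z^\epsilon$ solves $dZ^\epsilon(t)=b(X^\epsilon_t,\mathscr{L}_{X^\epsilon_t})\,dt+\sqrt{\epsilon}\,\sigma(X^\epsilon_t,\mathscr{L}_{X^\epsilon_t})\,dW(t)$ with $Z^\epsilon(0)=\xi(0)-D(\xi)$. The first step is to transfer estimates on $Z^\epsilon$ back to the segment process: from (A1) one has $|X^\epsilon(u)|\le|Z^\epsilon(u)|+\alpha\|X^\epsilon_u\|_\infty$, and since $\sup_{0\le r\le s}\|X^\epsilon_r\|_\infty=\|\xi\|_\infty\vee\sup_{0\le u\le s}|X^\epsilon(u)|\le\|\xi\|_\infty+\sup_{0\le u\le s}|X^\epsilon(u)|$, one absorbs the $\alpha\sup|X^\epsilon|$ term on the left to obtain
\[
\sup_{0\le r\le s}\|X^\epsilon_r\|_\infty\le\frac{1}{1-\alpha}\Big(\|\xi\|_\infty+\sup_{0\le u\le s}|Z^\epsilon(u)|\Big),
\]
which is the source of the factor $(1-\alpha)^{-2}$ in the asserted bound.

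Next I would apply Itô's formula to $|Z^\epsilon(t)|^2$, getting
\[
|Z^\epsilon(t)|^2=|\xi(0)-D(\xi)|^2+\int_0^t\Big(2\langle Z^\epsilon(s),b(X^\epsilon_s,\mathscr{L}_{X^\epsilon_s})\rangle+\epsilon\|\sigma(X^\epsilon_s,\mathscr{L}_{X^\epsilon_s})\|_{HS}^2\Big)\,ds+M^\epsilon(t),
\]
where $M^\epsilon(t)=2\sqrt{\epsilon}\int_0^t\langle Z^\epsilon(s),\sigma(X^\epsilon_s,\mathscr{L}_{X^\epsilon_s})\,dW(s)\rangle$ is a local martingale. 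Since $Z^\epsilon(s)=X^\epsilon_s(0)-D(X^\epsilon_s)$, estimate \eqref{1.2} bounds both $2\langle Z^\epsilon(s),b(X^\epsilon_s,\mathscr{L}_{X^\epsilon_s})\rangle$ and $\|\sigma(X^\epsilon_s,\mathscr{L}_{X^\epsilon_s})\|_{HS}^2$ by $L_2\big(1+\|X^\epsilon_s\|_\infty^2+\mathscr{L}_{X^\epsilon_s}(\|\cdot\|_\infty^2)\big)$; note that the one-sided form of (A2) is precisely what is used here, since no pointwise bound on $|b|$ is available. The only place the mean-field structure enters is the identity $\mathscr{L}_{X^\epsilon_s}(\|\cdot\|_\infty^2)=\mathbb{E}[\|X^\epsilon_s\|_\infty^2]$, which causes no difficulty.

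I would then take the supremum over $[0,t]$ and expectations. With $g(t):=\mathbb{E}[\sup_{0\le s\le t}\|X^\epsilon_s\|_\infty^2]$, the drift and the $\epsilon\|\sigma\|_{HS}^2$ term are controlled directly by the previous bound together with $\mathbb{E}[\|X^\epsilon_s\|_\infty^2]\le g(s)$. For the martingale part, the Burkholder--Davis--Gundy inequality and $|\sigma^{*}z|^2\le\|\sigma\|_{HS}^2|z|^2$ give
\[
\mathbb{E}\Big[\sup_{0\le r\le t}|M^\epsilon(r)|\Big]\le C\sqrt{\epsilon}\,\mathbb{E}\Big[\sup_{0\le s\le t}|Z^\epsilon(s)|\Big(\int_0^t\|\sigma(X^\epsilon_s,\mathscr{L}_{X^\epsilon_s})\|_{HS}^2\,ds\Big)^{1/2}\Big],
\]
and Young's inequality with parameter proportional to $(C\sqrt{\epsilon})^{-1}$ lets one absorb $\tfrac12\mathbb{E}[\sup_{0\le s\le t}|Z^\epsilon(s)|^2]$ into the left-hand side at the cost of an $\epsilon$-dependent multiple of $L_2\int_0^t(1+g(s))\,ds$. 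Combining with $|\xi(0)-D(\xi)|^2\le(1+\alpha)^2\|\xi\|_\infty^2$ and the first step, and keeping careful track of the numerical constants, yields the closed inequality
\[
g(t)\le\frac{(2\alpha^2+3\alpha+3)\|\xi\|_\infty^2}{(1-\alpha)^2}+\frac{2L_2(1+65\epsilon)}{(1-\alpha)^2}\int_0^t\big(1+2g(s)\big)\,ds,\qquad t\in[0,T],
\]
and Gronwall's inequality then gives exactly $g(T)\le L_3(\epsilon)$ (the exponent $4L_2T(1+65\epsilon)/(1-\alpha)^2$ coming from the factor $2$ in front of $g(s)$).

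Two technical points require care. First, $g(t)$ is not a priori known to be finite, so the argument should be run first with the stopping times $\tau_n:=\inf\{t\ge0:\|X^\epsilon_t\|_\infty\ge n\}$, replacing $t$ by $t\wedge\tau_n$ throughout (which also makes $M^\epsilon(\cdot\wedge\tau_n)$ a true martingale), and then passing to the limit $n\to\infty$ via Fatou's lemma; alternatively one may invoke the finiteness of the second moment already furnished by the existence result following \cite{HY21}. Second, the main obstacle is not conceptual but the bookkeeping: producing the explicit numerator constant $2\alpha^2+3\alpha+3$ and the precise dependence $1+65\epsilon$ requires choosing the constants in $(a+b)^2\le 2a^2+2b^2$, in the absorption of the martingale term, and the BDG constant so that everything lines up with the stated $L_3(\epsilon)$. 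Each individual estimate is routine, but the constants must be kept consistent.
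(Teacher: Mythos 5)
Your proposal follows the paper's proof almost step for step: apply It\^o's formula to $|\tilde Z(t)|^2$ with $\tilde Z(t)=X^\epsilon(t)-D(X^\epsilon_t)$, invoke \eqref{1.2} for the drift and diffusion (using $\mathscr{L}_{X^\epsilon_s}(\|\cdot\|_\infty^2)=\mathbb{E}[\|X^\epsilon_s\|_\infty^2]$), control the martingale with BDG plus Young's inequality, transfer back to the segment norm via the Lipschitz property of $D$, and close with Gronwall. The remarks on localizing with stopping times and on the mean-field term causing no difficulty are sensible additions that the paper leaves implicit. The one place your write-up actually diverges from the paper, and where the exact constant $L_3(\epsilon)$ would slip away, is the transfer step. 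The paper does the absorption at the \emph{quadratic} level (as in \eqref{1.4}): using $|D(X^\epsilon_t)+\tilde Z(t)|^2\le\alpha\|X^\epsilon_t\|_\infty^2+\tfrac{1}{1-\alpha}|\tilde Z(t)|^2$ (weighted Young with $a=\tilde Z$, $b=D$) it obtains
\[
\sup_{0\le t\le T}\|X^\epsilon_t\|_\infty^2\le\frac{1}{1-\alpha}\|\xi\|_\infty^2+\frac{1}{(1-\alpha)^2}\sup_{0\le t\le T}|\tilde Z(t)|^2,
\]
with coefficient exactly $\tfrac{1}{(1-\alpha)^2}$ on $\sup|\tilde Z|^2$. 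If instead you first pass to the displayed linear bound $\sup_r\|X^\epsilon_r\|_\infty\le\tfrac{1}{1-\alpha}(\|\xi\|_\infty+\sup_u|Z^\epsilon(u)|)$ and then square, any Young weight that keeps the $\|\xi\|_\infty^2$ coefficient finite forces the coefficient of $\sup|Z^\epsilon|^2$ strictly above $\tfrac{1}{(1-\alpha)^2}$, so the Gronwall step would produce a larger exponent and constant than the stated $L_3(\epsilon)$. The fix is simply to defer to the quadratic absorption as the paper does; with that, your closed inequality and the rest of your argument go through.
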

\begin{proof}
    Denoting $\tilde{Z}(t)=X^\epsilon(t)-D(X^\epsilon_t)$ and applying Itô's formula to $|\tilde{Z}(t)|^2$, we have
    \begin{align*}
        |\tilde{Z}(t)|^2=&|\tilde{Z}(0)|^2+\int_0^t \Big(2 
\langle \tilde{Z}(s), b(X_s^\varepsilon,\mathscr{L}_{X^\epsilon_s})  \rangle + \varepsilon  \| \sigma(X_s^\varepsilon,\mathscr{L}_{X^\epsilon_s}) \|_{\mathrm{HS}}^2\Big) \, ds \\
+&\sqrt{\epsilon}\int_0^t 2 
\langle \tilde{Z}(s), \sigma(X_s^\varepsilon,\mathscr{L}_{X^\epsilon_s})dW(s)  \rangle  .
    \end{align*}
    By the  Burkholder-Davis-Gundy (BDG) inequality and (\ref{1.2}), one has
    \begin{equation*}
       \mathbb{E}[\sup_{0\le t\le T}|\tilde{Z}(t)|^2]\le2(1+\alpha)^2\|\xi\|_\infty^2+2L_2T(1+65\epsilon)+4L_2(1+65\epsilon)\int_0^T \mathbb{E}[\sup_{0\le u\le s}\|X^\epsilon_u\|_\infty^2]\,ds.
    \end{equation*}
 By the elementary inequality $(a+b)^2\le(1+\kappa)(a^2+\frac{1}{\kappa}b^2)$ for any $\kappa>0$, let $\kappa=\frac{\alpha}{1-\alpha}$, one can get
\begin{align}\label{1.4}
    \sup_{0\le t \le T}\|X^{\epsilon}_t\|_\infty^2&\le\|\xi\|_\infty^2+\sup_{0\le t\le T}|D(X^\epsilon_t)+\tilde{Z}(t)|^2\nonumber\\
    &\le\|\xi\|_\infty^2+\alpha\sup_{0\le t \le T}\|X^{\epsilon}_t\|_\infty^2+\frac{1}{1-\alpha}\sup_{0\le t\le T}|\tilde{Z}(t)|^2,  
\end{align}
therefore,
\begin{align*}
    \mathbb{E}[\sup_{0\le t \le T}\|X^{\epsilon}_t\|_\infty^2]&\le\frac{1}{1-\alpha}\|\xi\|_\infty^2+\frac{1}{(1-\alpha)^2}\mathbb{E}[\sup_{0\le t\le T}|\tilde{Z}(t)|^2]\\
    &\le \frac{2\alpha^2+3\alpha+3}{(1-\alpha)^2}\|\xi\|_\infty^2+\frac{2L_2T(1+65\epsilon)}{(1-\alpha)^2}\\
    &+\frac{4L_2(1+65\epsilon)}{(1-\alpha)^2}\int_0^T \mathbb{E}[\sup_{0\le u\le s}\|X^\epsilon_u\|_\infty^2]\,ds.
\end{align*}
Hence, (\ref{1.3}) follows from Gronwall's inequality.
\end{proof}

 Moreover, we have the uniform bound estimates of $X^0_t$ and $X^\epsilon_t$.
\begin{lemma}
   Under Assumptions (A1)-(A3), it holds that
    \begin{equation}\label{2.1}
        \sup_{0\le t\le T}\|X^0_t\|_\infty^2\le \frac{(\alpha^2+\alpha+2)\|\xi\|_\infty^2+L_2T}{(1-\alpha)^2}e^{\frac{2L_2T}{(1-\alpha)^2}},
    \end{equation}
    and
        \begin{equation}\label{2.2}
       \mathbb{E}\big[\sup_{0\le t \le T}\|X^\epsilon_t-X^0_t\|_\infty^2\big]
       \le\frac{130\epsilon L_2T(1+2L_3(\epsilon))}{(1-\alpha)^2}\exp\Big\{\frac{4L}{(1-\alpha)^2}\Big\}=:L_4(\epsilon).
    \end{equation}
\end{lemma}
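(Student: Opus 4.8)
The plan is to establish the two estimates one after the other, each following the pattern of the a priori bound \eqref{1.3} proved above. For \eqref{2.1} I would run the argument of \eqref{1.3}--\eqref{1.4} with the stochastic solution $X^\epsilon$ replaced by the deterministic solution $X^0$ of the limiting neutral ODE; for \eqref{2.2} I would apply Itô's formula to the squared norm of the difference of the two drift-compensated (neutral) processes attached to $X^\epsilon$ and $X^0$. In each case one first estimates the relevant ``neutral'' process and then transfers the bound back to the segment norm of the process itself via the splitting and contraction argument of \eqref{1.4}, after which Gronwall's inequality closes the estimate.

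For \eqref{2.1}: set $Z^0(t):=X^0(t)-D(X^0_t)$. Since $b$ is bounded on bounded sets and the (unique) solution $X^0$ is continuous on $[-\tau,T]$, the map $Z^0$ is absolutely continuous with $\frac{d}{dt}|Z^0(t)|^2=2\langle Z^0(t),\,b(X^0_t,\delta_{X^0_t})\rangle$ a.e. Applying the first bound in \eqref{1.2} with $\mu=\delta_{X^0_t}$, so that $\mu(\|\cdot\|_\infty^2)=\|X^0_t\|_\infty^2$, and using $|Z^0(0)|=|\xi(0)-D(\xi)|\le(1+\alpha)\|\xi\|_\infty$, one gets
\[
|Z^0(t)|^2\le (1+\alpha)^2\|\xi\|_\infty^2+L_2t+2L_2\int_0^t\|X^0_s\|_\infty^2\,ds .
\]
Then, exactly as in \eqref{1.4} (splitting $X^0(t)=D(X^0_t)+Z^0(t)$ and using $(a+b)^2\le(1+\kappa)a^2+(1+\kappa^{-1})b^2$ with $\kappa=\alpha/(1-\alpha)$), one has $\sup_{0\le s\le t}\|X^0_s\|_\infty^2\le\frac{1}{1-\alpha}\|\xi\|_\infty^2+\frac{1}{(1-\alpha)^2}\sup_{0\le s\le t}|Z^0(s)|^2$. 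Combining these two displays (the $\|\xi\|_\infty^2$-coefficient assembles into $(\alpha^2+\alpha+2)/(1-\alpha)^2$) and invoking Gronwall's inequality gives \eqref{2.1}.

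For \eqref{2.2}: write $\tilde Z(t):=X^\epsilon(t)-D(X^\epsilon_t)$ and $Z^0(t):=X^0(t)-D(X^0_t)$, and note $\tilde Z(0)=Z^0(0)$. Itô's formula gives
\[
|\tilde Z(t)-Z^0(t)|^2=\int_0^t\!\Big(2\langle \tilde Z(s)-Z^0(s),\,b(X^\epsilon_s,\mathscr{L}_{X^\epsilon_s})-b(X^0_s,\delta_{X^0_s})\rangle+\epsilon\|\sigma(X^\epsilon_s,\mathscr{L}_{X^\epsilon_s})\|_{HS}^2\Big)ds+M(t),
\]
where $M$ is the martingale given by the stochastic integral against $dW$. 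The crucial observation is that $\tilde Z(s)-Z^0(s)=X^\epsilon(s)-X^0(s)-(D(X^\epsilon_s)-D(X^0_s))$ is precisely the vector appearing in (A2), so the drift cross-term is bounded by $L(\|X^\epsilon_s-X^0_s\|_\infty^2+\mathbb{W}_2(\mathscr{L}_{X^\epsilon_s},\delta_{X^0_s})^2)$, and since $\delta_{X^0_s}$ is a Dirac mass, $\mathbb{W}_2(\mathscr{L}_{X^\epsilon_s},\delta_{X^0_s})^2=\mathbb{E}[\|X^\epsilon_s-X^0_s\|_\infty^2]$. The diffusion term is controlled by the second bound in \eqref{1.2}, giving $\epsilon L_2(1+\|X^\epsilon_s\|_\infty^2+\mathbb{E}[\|X^\epsilon_s\|_\infty^2])$. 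Taking the supremum over $t$, applying the BDG inequality to $M$ together with Young's inequality to absorb $\tfrac12\mathbb{E}[\sup_{s\le t}|\tilde Z(s)-Z^0(s)|^2]$, and bounding $\mathbb{E}[\sup_{0\le s\le t}\|X^\epsilon_s\|_\infty^2]\le L_3(\epsilon)$ via \eqref{1.3}, one arrives at
\[
\mathbb{E}\big[\sup_{0\le s\le t}|\tilde Z(s)-Z^0(s)|^2\big]\le 130\,\epsilon L_2T(1+2L_3(\epsilon))+4L\int_0^t\mathbb{E}\big[\sup_{0\le u\le s}\|X^\epsilon_u-X^0_u\|_\infty^2\big]ds .
\]
Transferring back to $\|X^\epsilon_t-X^0_t\|_\infty^2$ — here no $\|\xi\|_\infty^2$ term appears since the initial segments coincide, so simply $\sup_{0\le s\le t}\|X^\epsilon_s-X^0_s\|_\infty^2\le(1-\alpha)^{-2}\sup_{0\le s\le t}|\tilde Z(s)-Z^0(s)|^2$ — and applying Gronwall's inequality yields \eqref{2.2}.

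The computations are largely routine adaptations of the preceding lemma, so the genuine points of care are: (i) estimating the compensated processes $\tilde Z-Z^0$ (resp.\ $Z^0$) rather than $X^\epsilon-X^0$ (resp.\ $X^0$) directly, since only the former is compatible with the one-sided Lipschitz structure of (A2); and (ii) using that the Wasserstein distance to a Dirac mass reduces to a second moment, which is exactly what makes the Gronwall loop close after taking expectations, as both the pathwise term and the Wasserstein term then appear as $\mathbb{E}[\|X^\epsilon_s-X^0_s\|_\infty^2]$. Keeping careful track of the BDG constant through the absorption step is what produces the explicit factor $130$ and the exponential constant $4L/(1-\alpha)^2$.
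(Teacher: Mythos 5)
Your proof follows the paper's argument essentially step for step: both parts compensate for the neutral term before taking $|\cdot|^2$ or applying Itô, bound the cross term via the one-sided Lipschitz condition in (A2) (using $\mathbb{W}_2(\mathscr{L}_{X^\epsilon_s},\delta_{X^0_s})^2\le\mathbb{E}\|X^\epsilon_s-X^0_s\|_\infty^2$), bound the quadratic-variation/BDG contribution via the linear-growth estimate \eqref{1.2}, transfer back to the segment norm via the contraction splitting of \eqref{1.4}/\eqref{2.3}, and close with Gronwall. The only cosmetic difference is that you fold the $\int\mathbb{E}[\sup\|X^\epsilon_u\|_\infty^2]\,ds$ contribution into the constant (via $L_3(\epsilon)$) slightly earlier than the paper does, which changes nothing.
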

\begin{proof}
Denote $Y^0(t)=X^0(t)-D(X^0_t)$. Utilizing (\ref{1.2}) leads to 
\begin{equation*}
|Y^0(t)|^2\le(1+\alpha)^2\|\xi\|_\infty^2+L_2\int_0^t(1+2\|X^0_s\|_\infty^2)\,ds
\end{equation*}
By an argument similar to (\ref{1.4}), it follows that
\begin{align*}
    \sup_{0\le t \le T}\|X^0_t\|_\infty^2&\le\frac{1}{1-\alpha}\|\xi\|_\infty^2+\frac{1}{(1-\alpha)^2}\sup_{0\le t\le T}|Y^0(t)|^2\\
    &\le \frac{(\alpha^2+\alpha+2)\|\xi\|_\infty^2+L_2T}{(1-\alpha)^2}
    +\frac{2L_2}{(1-\alpha)^2}\int_0^T \sup_{0\le u\le s}\|X^0_u\|_\infty^2\,ds.
\end{align*}
Hence, (\ref{2.1}) follows from Gronwall's inequality. Denote $\bar{Z}^{\epsilon}(t)=X^{\epsilon}(t)-X^{0}(t)-(D(X^\epsilon_t)-D(X^{0}_t))$. Applying Itô's formula, the BDG inequality and condition (A2) yields
\begin{align*}
           \mathbb{E}[\sup_{0\le t\le T}|\bar{Z}(t)|^2]&\le130\epsilon L_2T+4L\int_0^T \mathbb{E}[\sup_{0\le u\le s}\|X^\epsilon_u-X^0_u\|_\infty^2]\,ds\\
           &+260\epsilon L_2\int_0^T \mathbb{E}[\sup_{0\le u\le s}\|X^\epsilon_u\|_\infty^2]\,ds.
\end{align*}
A simple computation shows that 
$$
\sup_{0\le t\le T}\|X^\epsilon_t-X^0_t\|_\infty\le \frac{1}{1-\alpha}\sup_{0\le t\le T}|\bar{Z}(t)|,
$$
therefore,
\begin{equation*}
    \mathbb{E}\big[\sup_{0\le t \le T}\|X^\epsilon_t-X^0_t\|_\infty^2\big]
       \le\frac{130\epsilon L_2T(1+2L_3(\epsilon))}{(1-\alpha)^2}+\frac{4L}{(1-\alpha)^2}\int_0^T \mathbb{E}[\sup_{0\le u\le s}\|X^\epsilon_u-X^0_u\|_\infty^2]\,ds.
\end{equation*}
Hence, (\ref{2.2}) follows from Gronwall's inequality.
\end{proof}
 Define a map $M:H([0,T];\mathbb{R}^m)\to C([-\tau,T];\mathbb{R}^d)$ that for any $\varphi\in H([0,T];\mathbb{R}^m)$, $M(\varphi)$ as the solution of the following ODE:
\begin{align}
        M(\varphi)(t)-D(M_t(\varphi))&=M(\varphi)(0)-D(M_0(\varphi))\nonumber\\
        &+\int_0^tb(M_s(\varphi), \delta_{X^0_s})ds+\int_0^t\sigma(M_s(\varphi), \delta_{X^0_s})\dot{\varphi}(s)ds,
\end{align}
where $t\in[0,T]$ and $M_t(\varphi)(\theta)=M(\varphi)(t+\theta)$, $\theta\in[-\tau,0]$ and $M_0(\varphi)=\xi$. Let $I(\cdot)$ be defined by (\ref{2.00}) and define
\[
\widehat{I}(f) := \inf \left\{ I(\varphi) \;\middle|\; M(\varphi) = f,\; \varphi \in H([0,T];\mathbb{R}^m) \right\}, \quad f \in C([-\tau, T]; \mathbb{R}^d).
\]
 We now state the main results.

\begin{theorem}\label{T2.1}
    Assume (A1)-(A3) hold. Then:
\begin{itemize}
 \item[{\rm (i)}] The law of $Y^{\varepsilon}(\cdot)$ on \(C([-\tau, T]; \mathbb{R}^d)\) satisfies the LDP with  rate function $\widehat{I}(f)$;

\item[{\rm (ii)}]  The law of \(X^\varepsilon(\cdot)\) on \(C([-\tau, T]; \mathbb{R}^d)\) satisfies the LDP with the same rate function $\widehat{I}(f)$.
\end{itemize}
\end{theorem}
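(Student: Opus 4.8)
The plan is to prove parts (i) and (ii) in turn, following the route sketched in the Introduction: for (i), realise the law of $Y^{\varepsilon}(\cdot)$ as an exponentially good image, under a sequence of continuous maps, of $\nu_{\varepsilon}$, the law of $\sqrt{\varepsilon}W(\cdot)$ on $C([0,T];\mathbb{R}^m)$ (which obeys the LDP with good rate function $I$ by Schilder), and then invoke Lemma~\ref{L2.1}; for (ii), show that $X^{\varepsilon}(\cdot)$ and $Y^{\varepsilon}(\cdot)$ are exponentially equivalent in $C([-\tau,T];\mathbb{R}^d)$ and conclude by the stability of the LDP under exponential equivalence \cite{DZ} together with (i). I would first establish (i) under the additional hypothesis that $b$ and $\sigma$ are bounded. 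Writing $t_{k}=kT/n$, define $h_{n}:C([0,T];\mathbb{R}^m)\to C([-\tau,T];\mathbb{R}^d)$ by freezing, on each $[t_{k},t_{k+1}]$, the functional arguments of $b,\sigma$ at $t_{k}$ and using the increment of the driving path:
\begin{equation*}
h_{n}(g)(t)-D(h_{n}(g)_{t})=h_{n}(g)(t_{k})-D(h_{n}(g)_{t_{k}})+b(h_{n}(g)_{t_{k}},\delta_{X^{0}_{t_{k}}})(t-t_{k})+\sigma(h_{n}(g)_{t_{k}},\delta_{X^{0}_{t_{k}}})\big(g(t)-g(t_{k})\big),
\end{equation*}
with $h_{n}(g)_{0}=\xi$. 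Since $\alpha<1$ by (A1), this implicit recursion is uniquely solvable with continuous solution, and arguing inductively over the mesh shows $g\mapsto h_{n}(g)$ is continuous. On a level set $\{I\le r\}$ every $\varphi$ has $\|\dot{\varphi}\|_{L^{2}}^{2}=2I(\varphi)\le 2r$; feeding this into a Gronwall estimate for the Euler-type error of the neutral ODE defining $M$ gives $\sup_{\{I\le r\}}\|h_{n}(\varphi)-M(\varphi)\|_{\infty}\to0$, which is hypothesis \eqref{eq:2.2}. Finally I would verify that $\{\nu_{\varepsilon}\circ h_{n}^{-1}\}$ is an exponentially good approximation of the law of $Y^{\varepsilon}(\cdot)$: subtracting the defining equations, the error splits into a stochastic integral of $\sqrt{\varepsilon}[\sigma(Y^{\varepsilon}_{s},\cdot)-\sigma(h_{n}(\sqrt{\varepsilon}W)_{s},\cdot)]$ --- Lipschitz in the tracked difference by (A2) --- plus a discretization term $\sqrt{\varepsilon}\int_{t_{k}}^{t}[\sigma(h_{n}(\sqrt{\varepsilon}W)_{s},\cdot)-\sigma(h_{n}(\sqrt{\varepsilon}W)_{t_{k}},\cdot)]\,dW$; conditioning on the event that $\sqrt{\varepsilon}$ times the modulus of continuity of $W$ at scale $T/n$ is at most $\rho$, the discretization integrand has size $O(\sqrt{\varepsilon})$ and Lemma~\ref{L2.2} drives $\varepsilon\log\mathbb{P}$ of this term to $-\infty$, the complementary event being Gaussian-small, and a Gronwall argument (again via Lemma~\ref{L2.2}) absorbs the Lipschitz feedback. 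Lemma~\ref{L2.1} then yields the LDP for $Y^{\varepsilon}(\cdot)$ with rate $\inf\{I(\varphi):M(\varphi)=f\}=\widehat{I}(f)$.

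To remove the boundedness, truncate with a cutoff $\theta_{R}\in[0,1]$ equal to $1$ on $\{\|\cdot\|_{\infty}\le R\}$ and to $0$ off $\{\|\cdot\|_{\infty}\le 2R\}$, and set $b_{R}=\theta_{R}b$, $\sigma_{R}=\theta_{R}\sigma$; these are bounded, still satisfy (A1)--(A3), and --- crucially --- \eqref{1.2} holds for them with the \emph{same} constant $L_{2}$. Hence $Y^{\varepsilon,R}(\cdot)$ satisfies the LDP (by the bounded case) with skeleton map $M_{R}$ and rate $\widehat{I}_{R}(f)=\inf\{I(\varphi):M_{R}(\varphi)=f\}$, and since the skeleton bound obtained exactly as in \eqref{2.1} depends only on $I(\varphi)$ and on $\alpha,L_{2},T,\|\xi\|_{\infty}$ (all independent of $R$), one obtains an $R$-independent a priori bound $\{\widehat{I}_{R}\le r\}\subseteq\{\|f\|_{\infty}\le\Phi(r)\}$ together with $\widehat{I}_{R}=\widehat{I}$ on $\{\widehat{I}\le r\}$ whenever $R\ge\Phi(r)$. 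Because $\{\|f\|_{\infty}\ge\Phi(r)+\eta\}$ is closed and disjoint from $\{\widehat{I}_{R}\le r\}$, the LDP upper bound for $Y^{\varepsilon,R}(\cdot)$ forces $\limsup_{\varepsilon\to0}\varepsilon\log\mathbb{P}(\sup_{t}\|Y^{\varepsilon,R}_{t}\|_{\infty}\ge\Phi(r)+\eta)\le -r$; choosing $R=\Phi(r)+2\eta$ and using $Y^{\varepsilon}=Y^{\varepsilon,R}$ on $\{\sup_{t}\|Y^{\varepsilon,R}_{t}\|_{\infty}\le R\}$, the LDP upper and lower bounds for $Y^{\varepsilon}(\cdot)$ with rate $\widehat{I}$ follow directly from the definition of the LDP, which proves (i).

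For (ii), set $\bar{\Delta}^{\varepsilon}(t)=X^{\varepsilon}(t)-Y^{\varepsilon}(t)-(D(X^{\varepsilon}_{t})-D(Y^{\varepsilon}_{t}))$, which is precisely the vector appearing in (A2) with $\xi=X^{\varepsilon}_{t}$, $\eta=Y^{\varepsilon}_{t}$, $\mu=\mathscr{L}_{X^{\varepsilon}_{t}}$, $\nu=\delta_{X^{0}_{t}}$. Itô's formula for $|\bar{\Delta}^{\varepsilon}(t)|^{2}$ and the one-sided condition in (A2) produce a drift bounded by $L(1+\varepsilon)\big(\|X^{\varepsilon}_{t}-Y^{\varepsilon}_{t}\|_{\infty}^{2}+\mathbb{W}_{2}(\mathscr{L}_{X^{\varepsilon}_{t}},\delta_{X^{0}_{t}})^{2}\big)$, where the optimal coupling of a law with a Dirac mass gives $\mathbb{W}_{2}(\mathscr{L}_{X^{\varepsilon}_{t}},\delta_{X^{0}_{t}})^{2}=\mathbb{E}\|X^{\varepsilon}_{t}-X^{0}_{t}\|_{\infty}^{2}\le L_{4}(\varepsilon)=O(\varepsilon)$ \emph{deterministically} by \eqref{2.2}; using $\|X^{\varepsilon}_{t}-Y^{\varepsilon}_{t}\|_{\infty}^{2}\le(1-\alpha)^{-2}\sup_{u\le t}|\bar{\Delta}^{\varepsilon}(u)|^{2}$ and Gronwall, $\sup_{[0,T]}|\bar{\Delta}^{\varepsilon}|^{2}\le C\big(L_{4}(\varepsilon)+\sqrt{\varepsilon}\sup_{[0,T]}|N|\big)$ with $N$ a martingale whose bracket, on the stopping region $\{|\bar{\Delta}^{\varepsilon}|\le v\}$, is controlled via (A2) by $Cv(v+L_{4}(\varepsilon))$. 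A localisation and bootstrapping argument --- exploiting that $\|X^{\varepsilon}_{t}-Y^{\varepsilon}_{t}\|_{\infty}$ is of order $\sqrt{L_{4}(\varepsilon)}=O(\sqrt{\varepsilon})$ away from its rare excursions --- together with an exponential martingale inequality (equivalently Lemma~\ref{L2.2} after the localisation) then gives $\limsup_{\varepsilon\to0}\varepsilon\log\mathbb{P}(\|X^{\varepsilon}-Y^{\varepsilon}\|_{\infty}>\delta)=-\infty$ for every $\delta>0$, so \cite{DZ} yields (ii).

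I expect the main obstacle to be, uniformly, the superexponential estimates: the smallness of the discretization error in the exponentially-good-approximation step and the exponential equivalence in (ii). Both are complicated by (A2) supplying only a \emph{one-sided} Lipschitz bound on $b$ --- and only a joint state--measure bound, with no pointwise control of $b(\xi,\mu)-b(\eta,\nu)$ --- so the relevant difference processes cannot be handled by Lemma~\ref{L2.2} directly but must first be controlled in the $L^{2}$ sense through Itô's formula; the multiplicative dependence of $\sigma$ on the tracked difference then forces a delicate localisation/bootstrapping before an exponential martingale estimate can be applied. The neutral term $D$ enters every such estimate and is absorbed using $\alpha<1$ from (A1), exactly as in the derivations of \eqref{1.3}, \eqref{2.1} and \eqref{2.2}.
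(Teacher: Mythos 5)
Your proposal follows the same architecture as the paper: realise the law of $Y^{\varepsilon}(\cdot)$ via a time-discretized skeleton map and invoke Lemma~\ref{L2.1} under a boundedness assumption; truncate $b,\sigma$ to remove it; then prove exponential equivalence of $X^{\varepsilon}$ and $Y^{\varepsilon}$ for (ii). The one genuinely different piece is your removal of boundedness. The paper proves exponential tightness of $Y^{\varepsilon}$ directly (Lemma~\ref{L3.4}, by applying It\^o's formula to $(1+|G^{\varepsilon}|^{2})^{1/\varepsilon}$ and Gronwall) and then proves exponential equivalence of $Y^{\varepsilon}$ and $Y^{\varepsilon,R}$ (the lemma following \eqref{3.15}), afterwards checking the LDP bounds from the definition. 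You instead combine an $R$-uniform a priori bound on the truncated skeleton map $M^{R}$ (valid because $b_{R},\sigma_{R}$ inherit \eqref{1.2} with the same $L_{2}$) with the pathwise agreement of $Y^{\varepsilon}$ and $Y^{\varepsilon,R}$ up to the first exit from the ball of radius $R$, and then read off exponential tightness of $Y^{\varepsilon}$ from the LDP upper bound already established for $Y^{\varepsilon,R}$. This is correct and, if anything, more economical: it replaces the paper's quantitative It\^o estimate in Lemma~\ref{L3.4} by a soft pathwise-uniqueness argument, at the (modest) cost of having to quantify the level-set bound $\{\widehat{I}_{R}\le r\}\subseteq\{\|f\|_{\infty}\le\Phi(r)\}$.

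One missing device should be made explicit. In both the exponentially-good-approximation step and in (ii) you gesture at ``a Gronwall argument (again via Lemma~\ref{L2.2})'' or ``an exponential martingale inequality after localisation'' to close the superexponential estimates, while simultaneously (and correctly) observing that Lemma~\ref{L2.2} cannot touch the drift difference, since $b$ is only one-sided Lipschitz in (A2). The paper's actual mechanism is to apply It\^o's formula to $(\kappa^{2}+|G^{\varepsilon,n}(t)|^{2})^{\lambda}$ and then take $\lambda=1/\varepsilon$; the one-sided condition then yields a moment bound of the form $2\kappa^{2/\varepsilon}e^{C T/\varepsilon}$, and Chebyshev turns this into $\mathbb{P}(\cdot)\le(\kappa^{2}/(\kappa^{2}+(1-\alpha)^{2}\delta^{2}))^{1/\varepsilon}e^{CT/\varepsilon}$, which goes to $-\infty$ after $\varepsilon\log$ only because $\kappa$ can be sent to $0$. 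In (ii), this must be combined with the additional scaling $\rho=\sqrt{\varepsilon}$ so that $L_{4}(\varepsilon)/\rho^{2}=O(1)$ makes the Gronwall constant $\varepsilon$-uniform and the prefactor $(\varepsilon/(\varepsilon+(1-\alpha)^{2}\delta^{2}))$ vanishes as $\varepsilon\to0$; without fixing these exponents the localisation argument you outline does not close, because on the localised region $\{|\bar{\Delta}^{\varepsilon}|\le v\}$ the bracket is $O(\varepsilon v^{2})$, so the exponential martingale estimate alone delivers only an $\varepsilon$-independent tail $e^{-c/\varepsilon}$ rather than $-\infty$.
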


We construct the following process \( Y^{\varepsilon,n}(\cdot) \) by using an approximate scheme. For a positive real number \( t \), let $\lfloor t \rfloor$ be its integer part. Given any positive integer \( n \ge 1 \), we consider the following equation 
\begin{equation}
    \begin{cases}
    \mathrm{d}(Y^{\varepsilon,n}(t) - D\left(Y^{\varepsilon,n}_t\right)) = b\left(Y^{\varepsilon,n}_t,\delta_{X^0_t}\right)\,\mathrm{d}t + \sqrt{\varepsilon}\,\sigma\left(\bar{Y}^{\varepsilon,n}_t,\delta_{\bar{X}^0_t}\right)\,\mathrm{d}W(t), \quad t \in[0,T],\\
     Y^{\varepsilon,n}_0 = \xi,
\end{cases}
\end{equation}
where
\[
\bar{Y}^{\varepsilon,n}_t(\theta) := Y^{\varepsilon,n}((t + \theta) \wedge t_n),\quad \bar{X}^{0}_t(\theta) := X^{0}((t + \theta) \wedge t_n),\quad t_n := \frac{\lfloor nt \rfloor}{n}, \quad \ \theta \in [-\tau, 0].
\]
\begin{remark}
    Based on assumption (A1), we can give some estimates. The Lipschitz continuity of the neutral term $D$ allows us to control the difference between two segment processes in terms of their difference after removing the neutral term. Notice that
\begin{align*}
    &\sup_{0 \leq t \leq T}\| Y_t^{\varepsilon}-Y_t^{\varepsilon,n} \|_\infty
    =\sup_{0 \leq t \leq T}| Y^{\varepsilon}(t)-Y^{\varepsilon,n}(t)|\nonumber\\
    \leq &\sup_{0 \leq t \leq T}|Y^{\varepsilon}(t)-Y^{\varepsilon,n}(t)-(D(Y^\epsilon_t)-D(Y^{\epsilon,n}_t))|+\alpha\sup_{0 \leq t \leq T}\| Y_t^{\varepsilon}-Y_t^{\varepsilon,n} \|_\infty,
\end{align*}
then we have
\begin{equation}\label{2.3}
    \sup_{0 \leq t \leq T}\| Y_t^{\varepsilon}-Y_t^{\varepsilon,n} \|_\infty\le\frac{1}{1-\alpha}\sup_{0 \leq t \leq T}|Y^{\varepsilon}(t)-Y^{\varepsilon,n}(t)-(D(Y^\epsilon_t)-D(Y^{\epsilon,n}_t))|.
\end{equation}
In reverse, it is easy to see that for any $t\in[0,T]$
\begin{equation}\label{2.4}
   |Y^{\varepsilon}(t)-Y^{\varepsilon,n}(t)-(D(Y^\epsilon_t)-D(Y^{\epsilon,n}_t))|\le (1+\alpha)  \| Y_t^{\varepsilon}-Y_t^{\varepsilon,n} \|_\infty.
\end{equation}
These observations will be used later when we estimate the approximation error.
\end{remark}
Define the map
$
M^n(\cdot): C([0, T], \mathbb{R}^m) \to C([-\tau, T], \mathbb{R}^d)
$
by
\begin{align}
M^n(\varphi)(t) - D\left(M^n_t(\varphi)\right) & =
M^n(\varphi)(t_n) - D\left(M^n_{t_n}(\varphi)\right)
+ \int_{t_n}^{t} b\left(M^n_s(\varphi),\delta_{X^0_s}\right)\,\mathrm{d}s\nonumber\\
&+ \sigma\left(\bar{M}^{n}_t(\varphi),\delta_{\bar{X}^0_{t}}\right)\left[\varphi(t) - \varphi(t_n),\right], \quad t_n \leq t \leq t_n + \tfrac{1}{n},    
\end{align}
with initial condition $M^n_0(\varphi)(t) = \xi(t)$, $-\tau \leq t \leq 0$. Here, for any \( t \in [0,T] \), define $M^n_t(\varphi)(\theta) := M^n(\varphi)(t + \theta)$ and $ \bar{M}^{n}_t(\varphi)(\theta) := M^n(\varphi)\big((t + \theta) \wedge t_n\big)$ for $\theta \in [-\tau, 0]$.
Therefore, $Y^{\varepsilon,n}(t)=M^n(\sqrt{\epsilon}W)(t)$ is a continuous map. 

\section{Proof of (i) of Theorem 2.1 }
In this section, we first consider the case where the drift terms $b$
and $\sigma$ are bounded, that is, satisfying the condition (\ref{3.1}). By combining Lemmas \ref{L2.1} and \ref{L2.2}, we show that under assumptions (A1)-(A3) and (\ref{3.1}), the process \(Y^\varepsilon(t)\) satisfies the LDP (see Lemma \ref{L3.3}). To extend this result beyond the bounded setting, we  remove the boundedness assumption on $b$ and $\sigma$, and define the bounded drift and diffusion coefficients $b_R$ and $\sigma_R$ using the truncation mapping $\chi_R$. By applying the definition of the LDP, we further prove that under assumptions (A1)-(A3), the process \(Y^\varepsilon(t)\) still satisfies the LDP (see Theorem \ref{T2.1}(i)).
\subsection{Case of Bounded Coefficients}
The next lemma provides a uniform exponential estimate of the error between the continuous time process \(Y^\varepsilon(t)\) and its time-discretized counterpart \(Y^{\varepsilon,n}(t)\). 
\begin{lemma}\label{L3.1}
    Let assumptions (A1) and (A2) hold. Moreover, suppose that the coefficients $b$ and $\sigma$ of system (\ref{1.1}) are bounded, i.e. there exists a positive constant $L_5$ such that 
    \begin{equation}\label{3.1}
        |b(\xi, \mu)|\vee\|\sigma(\xi, \mu)\|_{HS}\le L_5,\, \forall \xi\in\mathscr{C}, \forall\mu\in\mathscr{P}_2(\mathscr{C}).
    \end{equation}
    Then, for any $\delta>0$, it holds that
    \begin{equation}
        \lim_{n\to\infty}\limsup_{\epsilon\to0}\epsilon\log\mathbb{P}(\sup_{-\tau\le t\le T}|Y^\epsilon(t)-Y^{\epsilon,n}(t)
        |>\delta)=-\infty . 
\end{equation}
\end{lemma}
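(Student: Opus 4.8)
\noindent
The plan is to reduce the statement to an exponential estimate on the neutral difference $\Delta(t):=\big(Y^{\varepsilon}(t)-D(Y^{\varepsilon}_t)\big)-\big(Y^{\varepsilon,n}(t)-D(Y^{\varepsilon,n}_t)\big)$. By (\ref{2.3}) (applied to the pair $Y^{\varepsilon},Y^{\varepsilon,n}$, which agree on $[-\tau,0]$) one has $\sup_{-\tau\le t\le T}|Y^{\varepsilon}(t)-Y^{\varepsilon,n}(t)|\le\frac{1}{1-\alpha}\sup_{0\le t\le T}|\Delta(t)|$, so it suffices to prove
\[
\lim_{n\to\infty}\limsup_{\varepsilon\to0}\varepsilon\log\mathbb P\Big(\sup_{0\le t\le T}|\Delta(t)|>(1-\alpha)\delta\Big)=-\infty .
\]

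\textbf{Step 1: the discretization error is exponentially small.} I would first show that for every fixed $r>0$,
\[
\lim_{n\to\infty}\limsup_{\varepsilon\to0}\varepsilon\log\mathbb P\Big(\sup_{0\le t\le T}\|Y^{\varepsilon,n}_t-\bar Y^{\varepsilon,n}_t\|_\infty\ge r\Big)=-\infty .
\]
Indeed $\|Y^{\varepsilon,n}_t-\bar Y^{\varepsilon,n}_t\|_\infty$ is dominated by the oscillation of $Y^{\varepsilon,n}$ over the mesh cell containing $t$, which by (A1) is in turn dominated, up to the factor $(1-\alpha)^{-1}$, by the oscillation of $Z^{\varepsilon,n}(t):=Y^{\varepsilon,n}(t)-D(Y^{\varepsilon,n}_t)$; and on each interval $[t_k,t_{k+1}]$ the process $Z^{\varepsilon,n}$ is an It\^o process whose drift is bounded by $L_5$ and whose diffusion coefficient is bounded by $\sqrt{\varepsilon}\,L_5$ (here the boundedness hypothesis (\ref{3.1}) is used). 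Lemma~\ref{L2.2} with time horizon $1/n$, followed by a union bound over the $\lfloor nT\rfloor+1$ cells, then gives the claim, and the analogous (deterministic) statement for $\|X^0_t-\bar X^0_t\|_\infty$ follows from uniform continuity of the fixed path $X^0$.

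\textbf{Step 2: a Gronwall inequality for $\Delta$.} From the two equations,
\[
d\Delta(t)=\big[b(Y^{\varepsilon}_t,\delta_{X^0_t})-b(Y^{\varepsilon,n}_t,\delta_{X^0_t})\big]dt+\sqrt{\varepsilon}\big[\sigma(Y^{\varepsilon}_t,\delta_{X^0_t})-\sigma(\bar Y^{\varepsilon,n}_t,\delta_{\bar X^0_t})\big]dW(t).
\]
Applying It\^o's formula to $|\Delta(t)|^2$, the drift term is controlled by the one-sided Lipschitz inequality in (A2) with $\mu=\nu=\delta_{X^0_s}$, which gives $2\langle\Delta(s),b(Y^{\varepsilon}_s,\cdot)-b(Y^{\varepsilon,n}_s,\cdot)\rangle\le L\|Y^{\varepsilon}_s-Y^{\varepsilon,n}_s\|_\infty^2$; the quadratic-variation term is bounded, via the Lipschitz bound on $\sigma$ and Step~1, by $\varepsilon L\big(\|Y^{\varepsilon}_s-Y^{\varepsilon,n}_s\|_\infty^2+e_n(s)^2\big)$, where $e_n(s):=\|Y^{\varepsilon,n}_s-\bar Y^{\varepsilon,n}_s\|_\infty+\|X^0_s-\bar X^0_s\|_\infty$ is the exponentially small discretization error. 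Passing from $\|Y^{\varepsilon}_s-Y^{\varepsilon,n}_s\|_\infty$ to $\sup_{u\le s}|\Delta(u)|$ by (\ref{2.3})--(\ref{2.4}), taking suprema, and applying Gronwall's lemma yields, for a constant $C=C(L,T,\alpha)$,
\[
\sup_{0\le t\le T}|\Delta(t)|^2\le C\Big(\varepsilon\sup_{0\le s\le T}e_n(s)^2+\sup_{0\le t\le T}\mathcal M(t)\Big),\qquad \mathcal M(t):=2\sqrt{\varepsilon}\int_0^t\big\langle\Delta(s),\big(\sigma(Y^{\varepsilon}_s,\cdot)-\sigma(\bar Y^{\varepsilon,n}_s,\cdot)\big)dW(s)\big\rangle .
\]

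\textbf{Step 3: localization, the exponential tail, and the main obstacle.} Fix a small $\theta>0$ and let $\rho$ be the first time that $\sup_{u\le t}|\Delta(u)|$ reaches $(1-\alpha)\delta$ or that the mesh oscillation of $Y^{\varepsilon,n}$ reaches $\theta$, truncated at $T$. Outside the event that the oscillation is large (which is handled by Step~1), the event $\{\sup_{0\le t\le T}|\Delta(t)|>(1-\alpha)\delta\}$ forces $\rho\le T$ and, by path continuity, $\sup_{u\le\rho}|\Delta(u)|=(1-\alpha)\delta$; Step~2 (run up to $\rho$) then forces $\sup_{t\le T}\mathcal M(t\wedge\rho)$ to exceed a fixed positive constant. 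On $[0,\rho]$ the integrand of $\mathcal M$ has Hilbert--Schmidt norm at most $2\sqrt{\varepsilon}\,|\Delta(s)|\,\|\sigma(Y^{\varepsilon}_s,\cdot)-\sigma(\bar Y^{\varepsilon,n}_s,\cdot)\|_{\mathrm{HS}}$, so the probability of that event is estimated by applying Lemma~\ref{L2.2} with vanishing drift to the stopped martingale $\mathcal M(\cdot\wedge\rho)$; a union bound with Step~1, then $\varepsilon\to0$, $n\to\infty$ and $\theta\to0$, would finish the argument. The crux is precisely this martingale estimate: because the diffusion coefficient of $\Delta$ contains the self-referential difference $\sigma(Y^{\varepsilon}_s,\cdot)-\sigma(Y^{\varepsilon,n}_s,\cdot)$, the quadratic variation of $\mathcal M(\cdot\wedge\rho)$ is essentially of order $\varepsilon\int|\Delta(s)|^{4}ds$ rather than $\varepsilon$ times the discretization error, and simply bounding $|\Delta(s)|\le(1-\alpha)\delta$ on $[0,\rho]$ gives only an $n$-independent decay rate. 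To make the rate diverge as $n\to\infty$ one has to exploit that $\sup_{u\le s}|\Delta(u)|^2$ is itself driven by the exponentially small $\sup_s e_n(s)^2$: concretely, one feeds the bound of Step~2 back into $\langle\mathcal M\rangle$ and closes the estimate along a geometric cascade of levels interpolating between the typical size of $\sup|\Delta|^2$ and $((1-\alpha)\delta)^2$, so that the accumulated cost grows with the number of levels and hence diverges as $n\to\infty$. I expect this multi-scale bookkeeping to be the only genuinely delicate part of the proof.
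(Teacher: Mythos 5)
Your Steps 1 and 2 mirror the paper's setup, but Step 3 leaves a genuine gap exactly where the proof requires its key technical idea, and the "geometric cascade" you sketch is not carried out. Let me be concrete about why the direct martingale–tail route fails and how the paper avoids it.

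As you correctly observe, the quadratic variation of $\mathcal M(\cdot\wedge\rho)$ is of order $\varepsilon\int_0^\rho|\Delta(s)|^2\bigl(\|Y^\varepsilon_s-Y^{\varepsilon,n}_s\|_\infty^2+e_n(s)^2\bigr)\,ds$, which is driven by $|\Delta|^4$ and not merely by the discretization error. Bounding $|\Delta|\le(1-\alpha)\delta$ on $[0,\rho]$ and invoking Lemma~\ref{L2.2} therefore yields $\limsup_{\varepsilon\to0}\varepsilon\log\mathbb P(\cdots)\le -c(\delta)$ for a finite, $n$-\emph{independent} constant $c(\delta)$; sending $n\to\infty$ (or $\theta\to0$) does not push this to $-\infty$, because $\theta$ only controls the subleading $e_n$ contribution. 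So the limit statement of Lemma~\ref{L3.1} is not reached. Your proposed remedy—a multi-scale cascade interpolating between the typical size of $\sup|\Delta|$ and $(1-\alpha)\delta$—is precisely the missing argument, and you acknowledge you have not written it down; as stated it is a programme, not a proof.

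The paper resolves this with a single, cleaner device. Instead of $|\Delta|^2$, it applies It\^o's formula to $(\kappa^2+|G^{\varepsilon,n}(t)|^2)^{\lambda}$ with $\lambda=1/\varepsilon$, where $\kappa$ is the same threshold that defines the mesh-oscillation stopping time $\gamma^{\varepsilon,n}(\kappa)$. Up to the stopping time, the discretization errors $\|Y^{\varepsilon,n}_s-\bar Y^{\varepsilon,n}_s\|_\infty^2$ and $\|X^0_s-\bar X^0_s\|_\infty^2$ are $\le\kappa^2\le\kappa^2+|G^{\varepsilon,n}(s)|^2$ and are absorbed into the Gronwall term, so the BDG--Gronwall step produces $\mathbb E\bigl[\sup_t(\kappa^2+|G^{\varepsilon,n}|^2)^{1/\varepsilon}\bigr]\le 2\kappa^{2/\varepsilon}e^{C_2T/\varepsilon}$ with $C_2$ independent of $n$ and $\kappa$. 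Dividing by the Chebyshev threshold $(\kappa^2+(1-\alpha)^2\delta^2)^{1/\varepsilon}$ gives the rate $\log\bigl(\kappa^2/(\kappa^2+(1-\alpha)^2\delta^2)\bigr)+C_2T$, and the divergence as $n\to\infty$ is obtained by first choosing $\kappa$ small (making the logarithm as negative as desired) and only then choosing $n$ large so that the stopping-time event $\{\gamma^{\varepsilon,n}(\kappa)\le T\}$ has matching exponential smallness. In short, the $\kappa$-shift inside the power $1/\varepsilon$ replaces your cascade: it converts the self-referential nature of the martingale into a ratio $\kappa^2/(\kappa^2+(1-\alpha)^2\delta^2)$ that you can make small for free. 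You should replace Step~3 with this argument—or complete the cascade in full detail—before the proof can be considered sound.
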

\begin{proof}
    Let $Z^{\epsilon,n}(t)=Y^{\epsilon}(t)-Y^{\epsilon,n}(t)$, for any $\kappa>0$ define $\gamma^{\epsilon,n}(\kappa)=\inf\{t\ge0: \|Y^{\epsilon,n}_t-\bar{Y}^{\epsilon,n}_t \|_\infty\ge\kappa\}$, $Z^{\epsilon,n}_\kappa(t)=Z^{\epsilon,n}(t\wedge\gamma^{\epsilon,n}(\kappa))$, and $\widetilde{\gamma}^{\epsilon,n}(\delta)=\inf\{t\ge0: |Z^{\epsilon,n}_\kappa(t)|\ge\delta\}$. With these stopping times, the probability of interest can be decomposed as
    \begin{align}
        &\mathbb{P}(\sup_{0\le t\le T}|Z^{\epsilon,n}(t)
        |>\delta)\nonumber\\
        =&\mathbb{P}(\sup_{0\le t\le T}|Z^{\epsilon,n}(t)
        |>\delta, \gamma^{\epsilon,n}(\kappa)\le T)+\mathbb{P}(\sup_{0\le t\le T}|Z^{\epsilon,n}(t)
        |>\delta, \gamma^{\epsilon,n}(\kappa)> T)\nonumber\\
        \le &\mathbb{P}(\gamma^{\epsilon,n}(\kappa)\le T)+\mathbb{P}(\widetilde{\gamma}^{\epsilon,n}(\delta)\le T).
    \end{align}
We first estimate $\mathbb{P}(\gamma^{\epsilon,n}(\kappa) \le T)$.  
For a fixed $t\in[0,T]$ and any $\theta\in[-\tau,0]$, we expand
\begin{align*}
&|Y_{t}^{\varepsilon,n}(\theta) - \bar{Y}_{t}^{\varepsilon,n}(\theta)| \\
=& |Y^{\varepsilon,n}(t + \theta) - Y^{\varepsilon,n}((t + \theta) \wedge t_n)| \\
=&|\big(Y^{\varepsilon,n}(t + \theta) - Y^{\varepsilon,n}(t + \theta)\big)\mathbf{1}_{\{t+\theta \leq t_n\}} 
+ \big(Y^{\varepsilon,n}(t + \theta) - Y^{\varepsilon,n}(t_n)\big)\mathbf{1}_{\{t+\theta> t_n\}} |\\
=&|\big(Y^{\varepsilon,n}(t + \theta) - Y^{\varepsilon,n}(t_n)\big)\mathbf{1}_{\{t+\theta> t_n\}}| .
\end{align*}
Then, by assumption (A1)
\begin{align*}
&\| Y_{t}^{\varepsilon,n} - \bar{Y}_{t}^{\varepsilon,n} \|_{\infty} \\
\leq &\sup_{t_n - t \leq \theta \leq 0}\Big|D(Y_{t+\theta}^{\varepsilon,n}) - D(Y_{t_n}^{\varepsilon,n}) 
+ \int_{t_n}^{t+\theta} b(Y_s^{\varepsilon,n},\delta_{X^0_s}) \, ds 
+ \sqrt{\varepsilon}\int_{t_n}^{t+\theta}  \, \sigma(\bar{Y}_s^{\varepsilon,n},\delta_{\bar{X}^0_s}) \, dW(s)\Big|\\
\leq &\sup_{t_n - t \leq \theta \leq 0}\alpha\Big\|Y_{t+\theta}^{\varepsilon,n} - Y_{t_n}^{\varepsilon,n}\Big\|_\infty 
+\Big| \int_{t_n}^{t+\theta} b(Y_s^{\varepsilon,n},\delta_{X^0_s}) \, ds 
+ \sqrt{\varepsilon}\int_{t_n}^{t+\theta}  \, \sigma(\bar{Y}_s^{\varepsilon,n},\delta_{\bar{X}^0_s}) \, dW(s)\Big|.
\end{align*}
This implies
\begin{align} \label{3.5}
&\sup_{0 \leq t \leq T} \| Y_{t}^{\varepsilon,n} - \bar{Y}_{t}^{\varepsilon,n} \|_{\infty}\nonumber\\ 
\leq &\frac{1}{1 - \alpha} 
\sup_{0 \leq t \leq T} \sup_{t_n - t \leq \theta \leq 0}
\left| \int_{t_n}^{t+\theta} b(Y_s^{\varepsilon,n},\delta_{X^0_s}) \, ds 
+  \sqrt{\varepsilon}\int_{t_n}^{t+\theta} \, \sigma(\bar{Y}_s^{\varepsilon,n},\delta_{\bar{X}^0_s})\, dW(s)
\right|.
\end{align}
Taking (\ref{3.1}) into consideration and utilizing Lemma 2.2, one gets that
\begin{align*}
&\mathbb{P}(\gamma^{\epsilon,n}(\kappa)\le T)\\
\leq &\mathbb{P} \left( 
\sup_{0 \leq t \leq T} 
\| Y_{t}^{\varepsilon,n} - \bar{Y}_{t}^{\varepsilon,n} \|_{\infty} \geq \kappa \right) \\
\leq &2d \exp \left( 
- \frac{(n \kappa (1 - \alpha) - \sqrt{d} L_5)^2}{2n L_5^2  d \varepsilon} 
\right),
\end{align*}
provided that $ \frac{\sqrt{d} L_5}{n(1-\alpha)}  < \kappa$. Taking limits, we conclude
\begin{equation} \label{3.9}
\lim_{n \to \infty} \limsup_{\varepsilon \to 0} \, \varepsilon \log \mathbb{P}(\gamma^{\epsilon,n}(\kappa) \leq T) = -\infty.
\end{equation}
It remains to estimate the probability $\mathbb{P}(\widetilde{\gamma}^{\epsilon,n}(\delta)\le T)$. We now introduce $G^{\epsilon.n}(t)=Y^{\epsilon}(t)-Y^{\epsilon,n}(t)-(D(Y^\epsilon_t)-D(Y^{\epsilon,n}_t))$. For $\lambda>0$,  
Applying It\^o's formula to $(\kappa^2+|G^{\epsilon.n}(t)|^2)^\lambda$ yields
\begin{align}  \label{3.2} 
&(\kappa^2+|G^{\epsilon.n}(t)|^2)^\lambda \nonumber\\
=& \kappa^{2\lambda}
+ 2\lambda\int_0^t  (\kappa^2 + |G^{\varepsilon,n}(s)|^2)^{\lambda - 1} 
\left\langle G^{\varepsilon,n}(s), b(Y_s^\varepsilon,\delta_{X^0_s}) - b(Y_s^{\varepsilon,n},\delta_{X^0_s}) \right\rangle \, ds\nonumber\\
+& \lambda \varepsilon\int_0^t(\kappa^2 + |G^{\varepsilon,n}(s)|^2)^{\lambda - 1} \| \sigma(Y_s^\varepsilon,\delta_{X^0_s}) - \sigma(\bar{Y}_s^{\varepsilon,n},\delta_{\bar{X}^0_s}) \|_{\mathrm{HS}}^2 \, ds\nonumber\\
+&2\lambda(\lambda - 1)\varepsilon\int_0^t(\kappa^2 + |G^{\varepsilon,n}(s)|^2)^{\lambda - 2} \left| (\sigma(Y_s^\varepsilon,\delta_{X^0_s}) - \sigma(\bar{Y}_s^{\varepsilon,n},\delta_{\bar{X}^0_s}) )^* G^{\varepsilon,n}(s) \right|^2 \, ds+\Phi(t)\nonumber\\
\le &\kappa^{2\lambda}+  L\lambda(2\epsilon(2\lambda-1)+1)\int_0^t(\kappa^2 + |G^{\varepsilon,n}(s)|^2)^{\lambda - 1} \| Z_s^{\varepsilon,n} \|_\infty^2\,ds\nonumber\\
+&2L\lambda(2\lambda-1)\epsilon\int_0^t(\kappa^2 + |G^{\varepsilon,n}(s)|^2)^{\lambda - 1}(\| Y_{s}^{\varepsilon,n} -\bar{Y}_{s}^{\varepsilon,n}\|_{\infty}^2+\|X^0_s-\bar{X}^0_s\|_\infty^2)\,ds+\Phi(t),
\end{align}
where we have used assumption (A2) and $\mathbb{W}_2(\delta_{X^0_s},\delta_{\bar{X}^0_s})^2\le \|X^0_s-\bar{X}^0_s\|_\infty^2$. Moreover, $\Phi(t) := 2\lambda\sqrt{\epsilon} \int_0^t 
(\kappa^2 + |G^{\varepsilon,n}(s)|^2)^{\lambda - 1} 
\langle G^{\varepsilon,n}(s), \sigma(Y_s^{\varepsilon,n},\delta_{X^0_s}) - \sigma(\bar{Y}_s^{\varepsilon,n},\delta_{\bar{X}^0_s}) dW(s) \rangle$
is a martingale.
By the BDG inequality
\begin{align*}
&\mathbb{E} \left[ \sup_{0 \leq t \leq T} \Phi(t) \right]\\
\leq & 8\sqrt{2\varepsilon}\lambda \, \mathbb{E}  \left( \int_0^T (\kappa^2 + |G^{\varepsilon,n}(s)|^2)^{2\lambda - 2} |G^{\varepsilon,n}(s)|^2 \| \sigma(Y_s^\varepsilon,\delta_{X^0_s}) -\sigma(\bar{Y}_s^{\varepsilon,n},\delta_{\bar{X}^0_s})\|_{\mathrm{HS}}^2 \, ds \right)^{1/2}  \\
\leq& \frac{1}{2} \mathbb{E} \left[ \sup_{0 \leq t \leq T } (\kappa^2 + |G^{\varepsilon,n}(t)|^2)^{\lambda} \right] \\
+ &64 \lambda^2 \varepsilon \, \mathbb{E} \left[ \int_0^{T} (\kappa^2 + |G^{\varepsilon,n}(s)|^2)^{\lambda - 1} \| \sigma(Y_s^\varepsilon,\delta_{X^0_s}) -\sigma(\bar{Y}_s^{\varepsilon,n},\delta_{\bar{X}^0_s})\|_{\mathrm{HS}}^2 \, ds \right] \\
\leq& \frac{1}{2} \mathbb{E} \left[ \sup_{0 \leq t \leq T } (\kappa^2 + |G^{\varepsilon,n}(t)|^2)^{\lambda} \right] \\
+&128 L \lambda^2 \varepsilon \, \mathbb{E} \left[ \int_0^{T} (\kappa^2 + |G^{\varepsilon,n}(s)|^2)^{\lambda - 1} \| Z_s^{\varepsilon,n} \|_\infty^2 \, ds \right] \\
+&128 L\lambda^2 \varepsilon \, \mathbb{E} \left[ \int_0^{T} (\kappa^2 + |G^{\varepsilon,n}(s)|^2)^{\lambda - 1} \| Y_{s}^{\varepsilon,n}-\bar{Y}_{s}^{\varepsilon,n}\|_{\infty}^2 \, ds \right] \\
+& 128 L \lambda^2 \varepsilon \, \mathbb{E} \left[ \int_0^{T} (\kappa^2 + |G^{\varepsilon,n}(s)|^2)^{\lambda - 1} \| X^0_s-\bar{X}^0_s \|_\infty^2 \, ds \right]. 
\end{align*}
This ,together with (\ref{3.2}), one has
\begin{align}\label{3.8}
&\mathbb{E} \left[ \sup_{0 \leq t \leq T} (\kappa^2+|G^{\epsilon.n}(t\wedge\gamma^{\epsilon,n}(\kappa)\wedge\widetilde{\gamma}^{\epsilon,n}(\delta))|^2)^\lambda \right]\nonumber \\
\leq& 2\kappa^{2\lambda} + 2L\lambda(132\lambda\varepsilon - 2\varepsilon + 1) \int_0^{T\wedge\gamma^{\epsilon,n}(\kappa)\wedge\widetilde{\gamma}^{\epsilon,n}(\delta)} \mathbb{E} \left( \kappa^2 + |G^{\varepsilon,n}(s)|^2 \right)^{\lambda - 1} \| Z_s^{\varepsilon,n} \|_\infty^2 \, ds \nonumber\\
+& 4L\lambda\varepsilon(66\lambda - 1) \int_0^{T\wedge\gamma^{\epsilon,n}(\kappa)\wedge\widetilde{\gamma}^{\epsilon,n}(\delta)} 
\mathbb{E} \left( \kappa^2 + |G^{\varepsilon,n}(s)|^2 \right)^{\lambda - 1} \| Y_s^{\varepsilon,n} - \bar{Y}_s^{\varepsilon,n} \|_\infty^2 \, ds \nonumber\\
+& 4L\lambda\varepsilon(66\lambda - 1) \int_0^{T\wedge\gamma^{\epsilon,n}(\kappa)\wedge\widetilde{\gamma}^{\epsilon,n}(\delta)} 
\mathbb{E} \left( \kappa^2 + |G^{\varepsilon,n}(s)|^2 \right)^{\lambda - 1} \| X_s^{0} - \bar{X}_s^{0} \|_\infty^2 \, ds \nonumber\\
\leq& 2\kappa^{2\lambda} + 2L\lambda(132\lambda\varepsilon - 2\varepsilon + 1) \int_0^{T\wedge\gamma^{\epsilon,n}(\kappa)\wedge\widetilde{\gamma}^{\epsilon,n}(\delta)} \mathbb{E} \left[ \sup_{0 \leq u \leq s} 
\left( \kappa^2 + |G^{\varepsilon,n}(u)|^2 \right)^{\lambda - 1} 
\| Z_u^{\varepsilon,n} \|_\infty^2 \right] ds \nonumber\\
+& 4L\lambda\varepsilon(66\lambda - 1) \int_0^{T\wedge\gamma^{\epsilon,n}(\kappa)\wedge\widetilde{\gamma}^{\epsilon,n}(\delta)} \mathbb{E} \left[ \sup_{0 \leq u \leq s} \left( \kappa^2 + |G^{\varepsilon,n}(u)|^2 \right)^{\lambda } \right] ds \nonumber\\
+& 4L\lambda\varepsilon(66\lambda - 1) \int_0^{T\wedge\gamma^{\epsilon,n}(\kappa)\wedge\widetilde{\gamma}^{\epsilon,n}(\delta)} \mathbb{E} \left[ \sup_{0 \leq u \leq s} \left( \kappa^2 + |G^{\varepsilon,n}(u)|^2 \right)^{\lambda - 1} \| X_u^{0} - \bar{X}_u^{0} \|_\infty^2 \right] ds ,
\end{align}
where the fact that $\| Y_s^{\varepsilon,n} - \bar{Y}_s^{\varepsilon,n} \|_\infty^2\le \kappa^2$ for $s\le \gamma^{\epsilon,n}(\kappa)$ has been used by utilizing the definition of stopping time $\gamma^{\epsilon,n}(\kappa)$. By (\ref{2.3}) we have
\begin{equation}\label{3.6}
    \sup_{0 \leq u \leq s}\| Z_u^{\varepsilon,n} \|_\infty\le\frac{1}{1-\alpha}\sup_{0 \leq u \leq s}|G^{\varepsilon,n}(u)|.
\end{equation}
By similar argument as in (\ref{3.5})
\begin{align} \label{3.7}
&\sup_{0 \leq u \leq s} \| X_{u}^{0} - \bar{X}_{u}^{0} \|_{\infty}\nonumber\\ 
\leq &\frac{1}{1 - \alpha} 
\sup_{0 \leq u \leq s} \sup_{u_n - u \leq \theta \leq 0}
\left\| 
\int_{u_n}^{u+\theta} b(X_s^{0},\delta_{X^0_s}) \, ds \right\|\le\frac{L_5}{(1-\alpha)n}<\kappa.
\end{align}
where the condition $ \frac{\sqrt{d} L_5}{(1-\alpha)n}  < \kappa$ has been used. Substituting (\ref{3.6}) and (\ref{3.7}) into (\ref{3.8}), we come to 
\begin{align*}
        &\mathbb{E} \left[ \sup_{0 \leq t \leq T}  (\kappa^2+|G^{\epsilon.n}(t\wedge\gamma^{\epsilon,n}(\kappa)\wedge\widetilde{\gamma}^{\epsilon,n}(\delta))|^2)^\lambda \right]\\
    \le& 2\kappa^{2\lambda}+C_1\int_0^{T\wedge\gamma^{\epsilon,n}(\kappa)\wedge\widetilde{\gamma}^{\epsilon,n}(\delta))}\mathbb{E} \left[ \sup_{0 \leq u \leq s} (\kappa^2+|G^{\epsilon.n}(u)|^2)^\lambda \right]\,ds
\end{align*}
where $C_1=2\lambda L(132\lambda\epsilon-2\epsilon+1)\frac{1}{(1-\alpha)^2}+8\lambda L\epsilon(66\lambda-1)$. Choosing \( \lambda = \frac{1}{\varepsilon} \), 
by the Gronwall inequality, we obtain
\[
\mathbb{E} \left[ \sup_{0 \leq t \leq T} (\kappa^2+|G^{\epsilon.n}(t\wedge\gamma^{\epsilon,n}(\kappa)\wedge\widetilde{\gamma}^{\epsilon,n}(\delta))|^2)^\frac{1}{\epsilon}  \right]
\leq 2 \kappa^{2/\varepsilon} e^{C_2 T / \varepsilon},
\]
where $C_2:= 2L\left( \frac{133}{(1 - \alpha)^2} + 264 \right)$. Noting that
\[
\left( \kappa^2 + (1 - \alpha)^2 \delta^2 \right)^{1/\varepsilon} \mathbb{P}\left( \widetilde{\gamma}^{\epsilon,n}(\delta) \leq T \right)
\leq \mathbb{E} \left[ \sup_{0 \leq t \leq T} (\kappa^2+|G^{\epsilon.n}(t\wedge\gamma^{\epsilon,n}(\kappa)\wedge\widetilde{\gamma}^{\epsilon,n}(\delta))|^2)^\frac{1}{\epsilon}  \right],
\]
one has
\[
\mathbb{P} \left(\widetilde{\gamma}^{\epsilon,n}(\delta) \leq T \right)
\leq \left( \frac{2^\epsilon\kappa^2}{\kappa^2 + (1 - \alpha)^2 \delta^2} \right)^{1/\varepsilon} e^{C_2 T / \varepsilon}.
\]
This implies
\[
\limsup_{\varepsilon \to 0} \varepsilon \log \mathbb{P} \left( \widetilde{\gamma}^{\epsilon,n}(\delta) \leq T \right)
\leq \log \left( \frac{\kappa^2}{\kappa^2 + (1 - \alpha)^2 \delta^2} \right) + C_2 T.
\]
Finally, given \( L > 0 \), choose \( \kappa \) sufficiently small such that $\log \left( \frac{\kappa^2}{\kappa^2 + (1 - \alpha)^2 \delta^2} \right) + C_2 T \leq -2L$. Next, utilizing (\ref{3.9}), choose \( N \) sufficient large such that
$\limsup_{\varepsilon \to 0} \varepsilon \log \mathbb{P} \left( \gamma^{\epsilon,n}(\kappa) \leq T \right) \leq -2L
\quad \text{for all } n \geq N$. Then for \( n \geq N \), there exists \( 0 < \varepsilon_n < 1 \) such that for all \( 0 < \varepsilon \leq \varepsilon_n \),
$\mathbb{P} \left( \gamma^{\epsilon,n}(\kappa) \leq T \right) \leq e^{-L/\varepsilon}$ and $\mathbb{P} \left( \widetilde{\gamma}^{\epsilon,n}(\delta) \leq T \right) \leq e^{-L/\varepsilon}$. Therefore,  we conclude
\[
\mathbb{P} \left( \sup_{0 \leq t \leq T} |Z^{\varepsilon,n}(t)| \geq \delta \right)
\leq 2 e^{-L/\varepsilon}, \quad 0 < \varepsilon \leq \varepsilon_n.
\]
Thus,
\[
\lim_{n\to\infty}\limsup_{\varepsilon \to 0} \, \varepsilon \log 
\mathbb{P} \left( \sup_{0 \leq t \leq T} |Z^{\varepsilon,n}(t)| > \delta \right) =-\infty.
\]
\end{proof}
\begin{lemma}\label{L3.2}
    Under the same assumptions in Lemma 3.1,  the following statement holds for any $\varrho<\infty$
    \begin{equation}
        \lim_{n\to\infty}\sup_{\{\varphi;I(\varphi)\le \varrho\}}\sup_{-\tau\le t\le T}|M^n(\varphi)(t)-M(\varphi)(t)|=0.
    \end{equation}
\end{lemma}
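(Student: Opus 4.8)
The plan is to compare $M^{n}(\varphi)$ with $M(\varphi)$ directly through a Gronwall-type estimate on the associated neutral difference, exploiting that $\{\varphi:I(\varphi)\le\varrho\}$ is an equibounded, equicontinuous family of $\tfrac12$-Hölder paths. Three elementary facts will be used throughout. First, if $I(\varphi)\le\varrho$ then, by Cauchy--Schwarz, $|\varphi(t)-\varphi(s)|\le\big(\int_s^t|\dot\varphi(u)|^2\,du\big)^{1/2}|t-s|^{1/2}\le\sqrt{2\varrho}\,|t-s|^{1/2}$ for $0\le s\le t\le T$, so in particular $\sup_{0\le t\le T}|\varphi(t)|\le\sqrt{2\varrho T}$ and $\int_0^T(1+|\dot\varphi(s)|^2)\,ds\le T+2\varrho$. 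Second, by (A1), for any continuous $f$ on $[-\tau,T]$ with $f_0=\xi$ one has $\sup_{-\tau\le s\le t}|f(s)|\le\frac{1}{1-\alpha}\big(\|\xi\|_\infty+\sup_{0\le s\le t}|f(s)-D(f_s)|\big)$ and $\sup_{-\tau\le s\le t}\|f_s-g_s\|_\infty\le\frac{1}{1-\alpha}\sup_{0\le s\le t}|(f-g)(s)-(D(f_s)-D(g_s))|$, exactly as in \eqref{1.4} and \eqref{2.3}. Third, $X^0$ is a fixed element of $C([-\tau,T];\mathbb{R}^d)$, hence uniformly continuous; write $\omega_{X^0}$ for its modulus of continuity, and set $\kappa_n(t):=\lfloor nt\rfloor/n$.

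\textbf{Step 1 (uniform bounds and a common modulus of continuity).} Using the boundedness $|b|\vee\|\sigma\|_{\mathrm{HS}}\le L_5$ together with the facts above, I first show $\Lambda_\varrho:=\sup_{I(\varphi)\le\varrho}\big(\|M(\varphi)\|_{\infty}\vee\sup_{n}\|M^{n}(\varphi)\|_{\infty}\big)<\infty$ on $[-\tau,T]$: the drift contribution between two times is $\le L_5T$, and the diffusion contribution must be written as a Riemann--Stieltjes integral against $\varphi$ and estimated by Cauchy--Schwarz, so that over a time span $h$ it is at most $L_5\sqrt{2\varrho}\,h^{1/2}$ (and not of order $\sqrt{n}$); absorbing the self-referential $D$-term via the first inequality of (A1) then gives the bound. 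Applying the same computation to increments yields a single nondecreasing $\omega$ with $\omega(0+)=0$, depending only on $\varrho,T,\|\xi\|_\infty,\alpha,L,L_5$, with $|M(\varphi)(t)-M(\varphi)(s)|\le\omega(|t-s|)$ and $|M^{n}(\varphi)(t)-M^{n}(\varphi)(s)|\le\omega(|t-s|)$ for all $n$, all $s,t\in[-\tau,T]$, and all $\varphi$ with $I(\varphi)\le\varrho$. A consequence used below: since $\bar{M}^n_t(\varphi)$ is obtained from $M^{n}(\varphi)$ by a $1$-Lipschitz time change with cap $\kappa_n(t)$, one has $\|\bar{M}^n_t(\varphi)-M^n_t(\varphi)\|_\infty\le\omega(1/n)$ and $\|\bar X^0_t-X^0_t\|_\infty\le\omega_{X^0}(1/n)$, uniformly in $t$ and $\varphi$.

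\textbf{Step 2 (the Gronwall comparison).} Fix $\varphi$ with $I(\varphi)\le\varrho$, set $\Delta^n:=M^{n}(\varphi)-M(\varphi)$ (so $\Delta^n\equiv0$ on $[-\tau,0]$), $D^n(t):=\sup_{-\tau\le r\le t}|\Delta^n(r)|$, and $\Gamma^n(t):=\Delta^n(t)-\big(D(M^n_t(\varphi))-D(M_t(\varphi))\big)$, so by (A1) $D^n(t)\le(1-\alpha)^{-1}\sup_{r\le t}|\Gamma^n(r)|$ and $|\Gamma^n(t)|\le(1+\alpha)D^n(t)$. Subtracting the integral forms of the two defining relations gives $\Gamma^n(t)=\int_0^t\big[b(M^n_s(\varphi),\delta_{X^0_s})-b(M_s(\varphi),\delta_{X^0_s})\big]\,ds+N^n(t)$, where $N^n(t)$ is the difference between the frozen diffusion term of $M^{n}$ and $\int_0^t\sigma(M_s(\varphi),\delta_{X^0_s})\dot\varphi(s)\,ds$. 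Writing $\sigma(\bar{M}^n_t(\varphi),\delta_{\bar X^0_t})[\varphi(t)-\varphi(\kappa_n(t))]=\int_{\kappa_n(t)}^t\sigma(\bar{M}^n_t(\varphi),\delta_{\bar X^0_t})\dot\varphi(s)\,ds$ cell by cell, using the Lipschitz bound for $\sigma$ from (A2) and the estimate $\|\bar{M}^n_t(\varphi)-M_s(\varphi)\|_\infty\le D^n(t)+2\omega(1/n)$ for $s$ in the mesh cell of $t$ (from Step 1), Cauchy--Schwarz in $\dot\varphi\in L^2$ gives $\sup_{r\le t}|N^n(r)|\le\sqrt{2L\varrho T}\,(D^n(t)+\varepsilon_n)$ with $\varepsilon_n:=2\omega(1/n)+2\omega_{X^0}(1/n)\to0$ uniformly over the level set. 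One then estimates $|\Gamma^n(t)|^2$: the drift part is handled by the \emph{one-sided} condition of (A2), which applies precisely because $\Gamma^n(t)=M^n_t(\varphi)(0)-M_t(\varphi)(0)-(D(M^n_t(\varphi))-D(M_t(\varphi)))$, yielding the contribution $\le L\int_0^t D^n(s)^2\,ds$; the remaining terms are absorbed using the bound on $|N^n|$, and the $\dot\varphi$-dependence is removed by running Gronwall's inequality against the finite measure $(1+|\dot\varphi(s)|^2)\,ds$. This should give $\sup_{-\tau\le t\le T}|\Delta^n(t)|^2\le C_\varrho\,\varepsilon_n\,e^{C_\varrho(T+2\varrho)}\to0$, uniformly over $\{\varphi:I(\varphi)\le\varrho\}$, which is the assertion.

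The main obstacle is the control of $N^n$, i.e.\ of the discretization error of the diffusion term. Two features make it delicate. First, $\dot\varphi$ is only square-integrable, so the error must be organised as a Riemann--Stieltjes integral against $\varphi$ and estimated by Cauchy--Schwarz in order to remain of order $\varepsilon_n$ rather than blowing up with $n$; this is also why the diffusion contributions must never be bounded increment-by-increment. Second, the neutral term $D$ re-reads the most recent values of the path, so the frozen segment $\bar{M}^n_t(\varphi)$ has to be compared both with $M^n_t(\varphi)$ (controlled by the uniform modulus $\omega(1/n)$) and, after passing through $\sigma$, with the unfrozen $M_s(\varphi)$ (controlled by $D^n(t)$), and these two comparisons must be balanced so that the Gronwall argument closes cleanly; the precise form of $\bar{M}^n$ (capping at $\kappa_n(t)$ rather than truncating the past) is what makes this possible, since it guarantees $\bar{M}^n_t(\varphi)$ depends on $M^{n}(\varphi)$ only through its values up to time $\kappa_n(t)$. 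The remaining estimates---the a priori bound, the equicontinuity, and the bookkeeping of the cell-wise telescoping---are routine once the preliminary inequalities from (A1) and the Hölder bound on the level set are in hand.
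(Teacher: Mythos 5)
Your proof follows essentially the same route as the paper's: define the neutral-removed difference $\Gamma^n=J^n$, reduce the sup-norm to it via (A1), estimate $|\Gamma^n(t)|^2$ by the one-sided Lipschitz condition on $b$ plus the Lipschitz condition on $\sigma$ paired with Cauchy--Schwarz in $\dot\varphi$, bound the cell-wise discretization error $\|\bar M^n_s-M^n_s\|_\infty$ and $\|\bar X^0_s-X^0_s\|_\infty$ uniformly over the level set by $O(n^{-1/2})$ (the paper gets $\tfrac{L_5(1+\sqrt{2\varrho})}{1-\alpha}n^{-1/2}$ directly from the boundedness (\ref{3.1}) and $I(\varphi)\le\varrho$), and close with Gronwall against the finite measure $(1+|\dot\varphi(s)|^2)\,ds$. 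Your explicit equicontinuity/modulus-of-continuity step is a cosmetic elaboration of what the paper does in one line; the substance of the argument is identical.
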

\begin{proof}
    Denote $J^n(\varphi)(t)=M^n(\varphi)(t)-M(\varphi)(t)-(D(M_t^n(\varphi))-D(M_t(\varphi))$. Similar to (\ref{2.3}), it holds that
  \begin{equation}\label{y1}
     \sup_{0\le t\le T}|M^n(\varphi)(t)-M(\varphi)(t)|\le\frac{1}{1-\alpha}\sup_{0\le t\le T}|J^n(\varphi)(t)|.
  \end{equation}
  Applying assumption (A2), we obtain the following estimate
  \begin{align}
      &|J^n(\varphi)(t)|^2\nonumber\\ 
      \leq& 2\int_0^t |\langle J^n(\varphi)(s), b(M_s^n(\varphi),\delta_{X^0_s}) - b(M_s(\varphi),\delta_{X^0_s}) \rangle |\, ds \nonumber\\
       + &2\int_0^t |\langle J^n(\varphi)(s), (\sigma(\bar{M}_s^n(\varphi),\delta_{\bar{X}^0_s}) - \sigma(M_s(\varphi)),\delta_{X^0_s})) \dot{\varphi}(s) \rangle| \, ds\nonumber \\
      \leq& L \int_0^t \|M_s^n(\varphi) - M_s(\varphi)\|_\infty^2 \, ds 
      + \int_0^t |J^n(\varphi)(s)|^2 \, ds \nonumber\\
       +&2 L \int_0^t \|M_s^n(\varphi) - M_s(\varphi)\|_\infty^2 |\dot{\varphi}(s)|^2 \, ds 
      + 2L \int_0^t \|\bar{M}_s^n(\varphi) - M_s^n(\varphi)\|_\infty^2 |\dot{\varphi}(s)|^2 \, ds\nonumber\\
      +&2L \int_0^t \|\bar{X}_s^0 - X_s^0\|_\infty^2 |\dot{\varphi}(s)|^2 \, ds\nonumber \\
      \le& (L+(1+\alpha)^2) \int_0^t \|M_s^n(\varphi) - M_s(\varphi)\|_\infty^2 \, ds
      +2 L \int_0^t \|M_s^n(\varphi) - M_s(\varphi)\|_\infty^2 |\dot{\varphi}(s)|^2 \, ds \nonumber\\
      +& 2L \int_0^t \|\bar{M}_s^n(\varphi) - M_s^n(\varphi)\|_\infty^2 |\dot{\varphi}(s)|^2 \, ds
      +2L \int_0^t \|\bar{X}_s^0 - X_s^0\|_\infty^2 |\dot{\varphi}(s)|^2 \, ds.
  \end{align}
 Using  (\ref{y1}), one has
    \begin{align}
      &\sup_{0\le t\le T}|M^n(\varphi)(t)-M(\varphi)(t)|^2\nonumber\\
      \leq &\frac{1}{(1-\alpha)^2}\Big\{(L+(1+\alpha)^2) \int_0^T \sup_{0\le u\le s}|M^n(\varphi)(u) - M(\varphi)(u)|^2 \, ds  \nonumber\\
      +&2 L \int_0^T \sup_{0\le u\le s}|M^n(\varphi)(u) - M(\varphi)(u)|^2 |\dot{\varphi}(s)|^2 \, ds\nonumber\\ 
      +& 2L \int_0^T \sup_{0\le u\le s}\|\bar{M}_u^n(\varphi) - M_u^n(\varphi)\|_\infty^2 |\dot{\varphi}(s)|^2 \, ds
      + 2L \int_0^T \sup_{0\le u\le s}\|\bar{X}_u^0 - X_u^0\|_\infty^2 |\dot{\varphi}(s)|^2 \, ds\Big\}.
  \end{align}
  Following the same line of reasoning as in (\ref{3.5}), H\"older's inequality and condition (\ref{3.1}), we obtain
  \begin{align} 
&\sup_{0 \leq u \leq s} \|\bar{M}_u^n(\varphi) - M_u^n(\varphi)\|_\infty\nonumber\\ 
\leq &\frac{1}{1 - \alpha} 
\sup_{0 \leq u\leq s} \sup_{u_n - u \leq \theta \leq 0}
\left| \int_{u_n}^{u+\theta} b(M_r^n(\varphi),\delta_{X^0_r}) \, dr 
+  \int_{u_n}^{u+\theta} \, \sigma(\bar{M}_r^n(\varphi),\delta_{\bar{X}^0_r})\dot{\varphi}(r)\, dr\right|\nonumber\\
\leq &\frac{L_5(1+\sqrt{2\varrho})}{(1-\alpha)}\Big(\frac{1}{n}\Big)^\frac{1}{2},
\end{align}
uniformly over the set $\{\varphi;I(\varphi)\le \varrho\}$. This together with (\ref{3.7}) yields
\begin{align*}
    &\sup_{0\le t\le T}|M^n(\varphi)(t)-M(\varphi)(t)|^2\\
    \leq &\frac{1}{(1-\alpha)^2}\Big\{(L+(1+\alpha)^2) \int_0^T \sup_{0\le u\le s}|M^n(\varphi)(u) - M(\varphi)(u)|^2 \, ds  \nonumber\\
    +&2 L \int_0^T \sup_{0\le u\le s}|M^n(\varphi)(u) - M(\varphi)(u)|^2 |\dot{\varphi}(s)|^2 \, ds +\frac{4\varrho LL_5^2[(1+\sqrt{2\varrho})^2+1]}{(1-\alpha)^2n}\Big\}.
\end{align*}
By Gronwall's inequality, it follows that
$$
\sup_{0\le t\le T}|M^n(\varphi)(t)-M(\varphi)(t)|^2\le\frac{4\varrho LL_5^2[(1+\sqrt{2\varrho})^2+1]}{(1-\alpha)^4n}\exp{\Big\{\frac{T(L+(1+\alpha)^2)+4L\varrho}{(1-\alpha)^2}}\Big\}.
$$
The above estimate holds uniformly over the set \(\{\varphi ; I(\varphi) \leq \varrho\}\). Letting \(n \to \infty\), we obtain the desired result.
\end{proof}
The family of measures $\{\nu_\epsilon\}$, corresponding to the law of the scaled Brownian motion $\{\sqrt{\epsilon}W\}_\epsilon$, satisfies the LDP with the rate function $I(\varphi)$. By Lemma \ref{L3.1}, the family $\{\nu_\epsilon\circ (M^n)^{-1}\}$, i.e.,  the law of $\{Y^{\varepsilon,n}\}_\epsilon$, provides an exponentially good approximation to $\{\nu_\epsilon\circ (M)^{-1}\}$,  the law of $\{Y^{\varepsilon}\}_\epsilon$. Moreover, Lemma \ref{L3.2} ensures that $M$ can be well approximated by $M^n$. Therefore, the conditions of Lemma \ref{L2.1} are satisfied, and we can conclude the following the desired result.
\begin{lemma}\label{L3.3}
    Let (A1), (A2) and (\ref{3.1}) hold.  Then the law of $\{Y^{\varepsilon}(t)\}_{t\in[-\tau,T]}$ satisfies the LDP with the rate function
\[
\widehat{I}(f) := \inf \left\{ I(\varphi) \;\middle|\; M(\varphi) = f,\; \varphi \in H([0,T];\mathbb{R}^m) \right\}, \quad f \in C([-\tau, T]; \mathbb{R}^d).
\]
\end{lemma}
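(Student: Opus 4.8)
The plan is to derive this lemma as a direct application of the extended contraction principle, Lemma~\ref{L2.1}, with the identifications $\mathbb{X}=C([0,T];\mathbb{R}^m)$ under the uniform topology, $\mathbb{Y}=C([-\tau,T];\mathbb{R}^d)$ under the metric $d(f,g)=\sup_{-\tau\le t\le T}|f(t)-g(t)|$, the maps $h_n=M^n$ and $h=M$, and the reference family $\nu_\epsilon=\mathrm{law}(\sqrt{\epsilon}W(\cdot))$. Four ingredients must be checked: (a) $\{\nu_\epsilon\}$ satisfies the LDP on $\mathbb{X}$ with a \emph{good} rate function; (b) each $M^n\colon\mathbb{X}\to\mathbb{Y}$ is continuous; (c) the uniform approximation estimate \eqref{eq:2.2} holds for $M^n\to M$ on the level sets of $I$; (d) the laws $\nu_\epsilon\circ(M^n)^{-1}$ are exponentially good approximations of the law of $Y^\epsilon(\cdot)$.

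For (a) we invoke Schilder's theorem \cite{DZ}: $\{\nu_\epsilon\}$ obeys the LDP on $C([0,T];\mathbb{R}^m)$ with the rate function $I$ of \eqref{2.00}, and its level sets $\{I\le r\}=\{\varphi\in H([0,T];\mathbb{R}^m):\|\varphi\|_H^2\le 2r\}$ are compact by Arzel\`a--Ascoli, so $I$ is good. For (b) we observe that $M^n(\varphi)$ is constructed piecewise on the subintervals $[t_n,t_n+\tfrac1n]$ using only the increments $\varphi(t)-\varphi(t_n)$ (and not $\dot\varphi$), hence is well-defined for \emph{every} $\varphi\in C([0,T];\mathbb{R}^m)$; proceeding successively over $[\tfrac kn,\tfrac{k+1}n]$, removing the neutral term as in \eqref{2.3} and using the Lipschitz bounds (A1)--(A2) together with the boundedness \eqref{3.1}, a Gronwall estimate shows $\varphi\mapsto M^n(\varphi)$ is locally Lipschitz, hence continuous, from the uniform topology on $\mathbb{X}$ to that on $\mathbb{Y}$. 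Since $Y^{\varepsilon,n}(\cdot)=M^n(\sqrt{\epsilon}W)(\cdot)$, this also identifies the law of $Y^{\varepsilon,n}(\cdot)$ with $\nu_\epsilon\circ(M^n)^{-1}$.

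Ingredient (c) is precisely Lemma~\ref{L3.2}, which states $\lim_{n\to\infty}\sup_{\{I(\varphi)\le\varrho\}}\sup_{-\tau\le t\le T}|M^n(\varphi)(t)-M(\varphi)(t)|=0$ for all $\varrho<\infty$; this is exactly \eqref{eq:2.2}. Ingredient (d) is precisely Lemma~\ref{L3.1}: by the definition of exponentially good approximations, it suffices that $\lim_{n\to\infty}\limsup_{\epsilon\to0}\epsilon\log\mathbb{P}\big(\sup_{-\tau\le t\le T}|Y^\epsilon(t)-Y^{\varepsilon,n}(t)|>\delta\big)=-\infty$ for every $\delta>0$, and this is the conclusion of Lemma~\ref{L3.1} under (A1), (A2) and \eqref{3.1}. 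With (a)--(d) established, Lemma~\ref{L2.1} applies and yields that the law of $Y^\epsilon(\cdot)$ satisfies the LDP on $C([-\tau,T];\mathbb{R}^d)$ with good rate function $\widetilde I(f)=\inf\{I(\varphi):M(\varphi)=f\}$; since $I\equiv+\infty$ off $H([0,T];\mathbb{R}^m)$, this infimum is unchanged when restricted to $\varphi\in H([0,T];\mathbb{R}^m)$, so $\widetilde I=\widehat I$, which is the claim.

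The substantive work is carried out in Lemmas~\ref{L3.1} and~\ref{L3.2}, so the residual obstacle in this proof is the bookkeeping in step (b): one must verify that $M^n$ is well-defined and continuous \emph{globally} on $C([0,T];\mathbb{R}^m)$, not merely on the Cameron--Martin space $H([0,T];\mathbb{R}^m)$, since Lemma~\ref{L2.1} requires the $h_n$ to be defined on all of $\mathbb{X}$. This is harmless because $M^n$ only ever uses Brownian increments; the limit map $M$, by contrast, need only be measurable, and its definition off $H([0,T];\mathbb{R}^m)$ is irrelevant as $I=+\infty$ there.
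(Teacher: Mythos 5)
Your proposal is correct and follows essentially the same route as the paper: apply the extended contraction principle (Lemma~\ref{L2.1}) with Schilder's theorem as the base LDP, Lemma~\ref{L3.1} supplying the exponentially good approximation, and Lemma~\ref{L3.2} supplying the uniform convergence $M^n\to M$ on level sets of $I$. The paper leaves the continuity of $M^n$ and the identification $\widetilde I=\widehat I$ implicit; your step (b) and closing remark make these explicit, which is a welcome but minor amplification rather than a different argument.
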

\subsection{Removal of the Boundedness Assumption}
Next, we remove the boundedness assumptions on $b$ and $\sigma$. We begin with

\begin{lemma}\label{L3.4}
    Assume that (A1)-(A3) hold, then for any $R>0$
    \begin{equation}
    \lim_{R\to\infty}\limsup_{\epsilon\to0}\epsilon\log\mathbb{P}(\sup_{-\tau\le t\le T}|Y^\epsilon(t)|>R)=-\infty.
    \end{equation}
\end{lemma}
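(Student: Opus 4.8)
The plan is to derive a uniform exponential moment of order $1/\varepsilon$ for the neutral term $Z^\varepsilon(t):=Y^\varepsilon(t)-D(Y^\varepsilon_t)$ on $[0,T]$ and then convert it into the claim via Markov's inequality. The key preliminary remark is that, by (\ref{2.1}), $\sup_{0\le s\le T}\|X^0_s\|_\infty^2\le C_0$ for a constant $C_0=C_0(\xi,T)$, so $\delta_{X^0_s}(\|\cdot\|_\infty^2)=\|X^0_s\|_\infty^2\le C_0$ uniformly in $s$. Inserting $\mu=\delta_{X^0_s}$ into (\ref{1.2}) along the $Y^\varepsilon$--equation gives
\[
2\langle Z^\varepsilon(s),b(Y^\varepsilon_s,\delta_{X^0_s})\rangle\vee\|\sigma(Y^\varepsilon_s,\delta_{X^0_s})\|_{\mathrm{HS}}^2\le L_2\bigl(1+C_0+\|Y^\varepsilon_s\|_\infty^2\bigr),
\]
and, as in the argument leading to (\ref{1.4}), $\|Y^\varepsilon_s\|_\infty^2\le \tfrac{1}{1-\alpha}\|\xi\|_\infty^2+\tfrac{1}{(1-\alpha)^2}V(s)$ where $V(s):=\sup_{0\le u\le s}|Z^\varepsilon(u)|^2$. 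Hence the drift and diffusion of $Z^\varepsilon$ are both controlled by $\widetilde C\,(1+V(s))$, with $\widetilde C$ depending only on $L_2,\alpha,C_0,\|\xi\|_\infty$.

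Next I would apply Itô's formula to $f(Z^\varepsilon(t))$ with $f(x)=(1+|x|^2)^{\lambda}$, $\lambda:=1/\varepsilon$ (so $\varepsilon\lambda=1$). The first-order term is bounded by $\lambda(1+|Z^\varepsilon(s)|^2)^{\lambda-1}L_2(1+C_0+\|Y^\varepsilon_s\|_\infty^2)$; for the second-order term one uses $|\sigma^*Z^\varepsilon(s)|^2\le\|\sigma\|_{\mathrm{HS}}^2(1+|Z^\varepsilon(s)|^2)$ and $\varepsilon\lambda=1$ to reduce it to $\le(2\lambda-1)(1+|Z^\varepsilon(s)|^2)^{\lambda-1}\|\sigma\|_{\mathrm{HS}}^2$. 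Using $(1+|Z^\varepsilon(s)|^2)^{\lambda-1}\le(1+V(s))^{\lambda-1}$ and the coefficient bound above, both contributions are dominated by $(3\lambda-1)L_2\widetilde C\,W(s)$, where $W(s):=(1+V(s))^{\lambda}=\sup_{0\le u\le s}f(Z^\varepsilon(u))$. For the martingale $M(t)$ in the Itô expansion, $\langle M\rangle_t\le 4\lambda L_2\widetilde C\int_0^t W(s)^2\,ds$, so the BDG and Young inequalities yield $\mathbb{E}[\sup_{u\le t}M(u)]\le\tfrac12\mathbb{E}[\sup_{u\le t}W(u)]+C\lambda L_2\widetilde C\int_0^t\mathbb{E}[W(s)]\,ds$. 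Absorbing the half-term and invoking Gronwall's inequality then gives
\[
\mathbb{E}\Bigl[\bigl(1+\sup_{0\le t\le T}|Z^\varepsilon(t)|^2\bigr)^{1/\varepsilon}\Bigr]\le 2\bigl(c_\xi\,e^{C_3T}\bigr)^{1/\varepsilon},
\]
with $c_\xi:=1+(1+\alpha)^2\|\xi\|_\infty^2\ge W(0)^{\varepsilon}$ and, crucially, $C_3$ \emph{independent of $\varepsilon$}. To make the absorption step legitimate one first works with the process stopped at $\beta_m:=\inf\{t:|Z^\varepsilon(t)|>m\}$, obtains the bound for $W(t\wedge\beta_m)$ uniformly in $m$, and passes to the limit by Fatou's lemma, using that $Y^\varepsilon$ has no explosion on $[0,T]$.

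Finally, by Markov's inequality,
\[
\mathbb{P}\Bigl(\sup_{0\le t\le T}|Z^\varepsilon(t)|>R'\Bigr)\le\frac{\mathbb{E}\bigl[(1+\sup_t|Z^\varepsilon(t)|^2)^{1/\varepsilon}\bigr]}{(1+R'^2)^{1/\varepsilon}}\le 2\Bigl(\tfrac{c_\xi e^{C_3T}}{1+R'^2}\Bigr)^{1/\varepsilon},
\]
hence $\limsup_{\varepsilon\to0}\varepsilon\log\mathbb{P}(\sup_{0\le t\le T}|Z^\varepsilon(t)|>R')\le\log c_\xi+C_3T-\log(1+R'^2)$. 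Since $\sup_{-\tau\le t\le0}|Y^\varepsilon(t)|=\|\xi\|_\infty$ and, as in (\ref{1.4}), $\sup_{0\le t\le T}|Y^\varepsilon(t)|\le\|\xi\|_\infty+\tfrac{1}{1-\alpha}\sup_{0\le t\le T}|Z^\varepsilon(t)|$, for $R>\|\xi\|_\infty$ one has $\{\sup_{-\tau\le t\le T}|Y^\varepsilon(t)|>R\}\subset\{\sup_{0\le t\le T}|Z^\varepsilon(t)|>(1-\alpha)(R-\|\xi\|_\infty)\}$. Taking $R'=(1-\alpha)(R-\|\xi\|_\infty)$ and letting $R\to\infty$ gives $\lim_{R\to\infty}\limsup_{\varepsilon\to0}\varepsilon\log\mathbb{P}(\sup_{-\tau\le t\le T}|Y^\varepsilon(t)|>R)=-\infty$.

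The main obstacle is the self-referential growth: the bounds in (\ref{1.2}) control the coefficients only through $\|Y^\varepsilon_s\|_\infty^2$, which feeds back via the neutral term into $\sup_{u\le s}|Z^\varepsilon(u)|^2$; choosing the exponent $\lambda=1/\varepsilon$ is precisely what keeps the Gronwall constant $C_3$ free of $\varepsilon$, so that $\varepsilon\log$ of the resulting moment bound remains finite while the $-\log(1+R'^2)$ term forces the limit to $-\infty$. A secondary technical point is the rigorous justification of the BDG absorption via the localization by $\beta_m$ described above.
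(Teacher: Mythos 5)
Your proposal is essentially the same argument as the paper: both apply Itô's formula to $(1+|Z^\varepsilon(t)|^2)^{\lambda}$ with $Z^\varepsilon(t)=Y^\varepsilon(t)-D(Y^\varepsilon_t)$, choose $\lambda=1/\varepsilon$ so the Gronwall constant is $\varepsilon$-free, use BDG plus Young to absorb the martingale supremum, bound $\|Y^\varepsilon_s\|_\infty^2$ by the supremum of $|Z^\varepsilon|^2$ via the contraction of $D$, and finish by a Chebyshev/Markov estimate. The only cosmetic difference is that you localize by the level of $|Z^\varepsilon|$ and pass to the limit, whereas the paper localizes directly by the stopping time $\zeta^{\varepsilon,R}=\inf\{t:|Y^\varepsilon(t)|>R\}$ and reads off the probability bound from the same exponential moment; the computations and the resulting $-\log(1+R^2)$ decay are identical.
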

\begin{proof}
    Define the stopping time $\zeta^{\epsilon,R}=\inf\{t\ge0:|Y^\epsilon(t)|>R\}$, let $G^\epsilon(t)=Y^\epsilon(t)-D(Y^\epsilon_t)$ and $G^{\epsilon,\xi}(t)=G^{\epsilon}(t\wedge\zeta^{\epsilon,R})$. For any $\lambda>0$, applying It\^o's formula to $(1+|G^{\epsilon}(t)|^2)^\lambda$ yields
    \begin{align} \label{3.12}  
&(1+|G^{\epsilon}(t)|^2)^\lambda\nonumber\\
=& (1+|G^{\epsilon}(0)|^2)^\lambda
+2\lambda \int_0^{t} (\rho^2 + |G^{\varepsilon}(s)|^2)^{\lambda - 1} 
\left\langle G^{\varepsilon}(s), b(Y_s^\varepsilon,\delta_{X^0_s})  \right\rangle \, ds\nonumber\\
+&\lambda \varepsilon \int_0^{t}(\rho^2 + |G^{\varepsilon}(s)|^2)^{\lambda - 1} \| \sigma(Y_s^\varepsilon,\delta_{X^0_s}) \|_{\mathrm{HS}}^2\,ds\nonumber\\
+&2\lambda(\lambda - 1)\varepsilon\int_0^{t}(\rho^2 + |G^{\varepsilon}(s)|^2)^{\lambda - 2} \left| \sigma(Y_s^\varepsilon,\delta_{X^0_s})  G^{\varepsilon,n}(s) \right|^2\,ds+\tilde{\Phi}(t)\nonumber\\
\le&\lambda L_2(2\lambda\epsilon-\epsilon + 1)\int_0^{t}(1 + |G^{\varepsilon}(s)|^2)^{\lambda - 1} (1+\|Y^\epsilon_s\|_\infty^2+\|X^0_s\|_\infty^2)\,ds+\tilde {\Phi}(t),
\end{align}
where $\tilde{\Phi}(t) := 2\lambda\sqrt{\epsilon} \int_0^{t} 
(1+ |G^{\varepsilon}(s)|^2)^{\lambda - 1} 
\langle G^{\varepsilon}(s), \sigma(Y_s^{\varepsilon},\delta_{X^0_s}) dW(s) \rangle$ is a martingale, and assumption (A2) has been used. By the BDG inequality
\begin{align}\label{3.11}
&\mathbb{E} \left[ \sup_{0 \leq t \leq T} \tilde{\Phi}(t) \right]\nonumber\\
\leq & 8\sqrt{2\varepsilon}\lambda \, \mathbb{E}  \left( \int_0^T  (1 + |G^{\varepsilon}(s)|^2)^{2\lambda - 2} |G^{\varepsilon}(s)|^2 \| \sigma(Y_s^\varepsilon,\delta_{X^0_s})\|_{\mathrm{HS}}^2 \, ds \right)^{1/2}  \nonumber\\
\leq& \frac{1}{2} \mathbb{E} \left[ \sup_{0 \leq t \leq T } (1 + |G^{\varepsilon}(t)|^2)^{\lambda} \right] \nonumber\\
+&64 L_2 \lambda^2 \varepsilon \, \mathbb{E} \left[ \int_0^{T} (1 + |G^{\varepsilon}(s)|^2)^{\lambda - 1} (1+\|Y^\epsilon_s\|_\infty^2+\| X_s^0 \|_\infty^2 )\, ds \right] .
\end{align}
Combining (\ref{3.12}) and (\ref{3.11}) yields
\begin{align}
&\mathbb{E} \left[ \sup_{0 \leq t \leq T} (1+|G^{\epsilon}(t\wedge\zeta^{\epsilon,R})|^2)^\lambda \right]\nonumber \\
\leq& 2(1+(1+\alpha)^2\|\zeta\|_\infty^2)^\lambda \nonumber\\
+& 2L_2\lambda(66\lambda\varepsilon - \varepsilon + 1) \mathbb{E}\int_0^{T\wedge\zeta^{\epsilon,R}}  \sup_{0\le u\le s}\left( 1+ |G^{\varepsilon}(u)|^2 \right)^{\lambda - 1} (1+\|Y^{\epsilon}_u\|_\infty^2+\| X_u^{0} \|_\infty^2) \, ds .
\end{align}
Similar to (\ref{1.4}), one can get
\begin{equation}
    \sup_{0\le u \le s}\|Y^{\epsilon}_u\|_\infty^2\le\frac{1}{1-\alpha}\|\zeta\|_\infty^2+\frac{1}{(1-\alpha)^2}\sup_{0\le u\le s}|G^\epsilon(u)|^2.
\end{equation}
This, together with (\ref{2.1}), implies
\begin{align*}
    &\mathbb{E} \left[ \sup_{0 \leq t \leq T} (1+|G^{\epsilon,\zeta}(t)|^2)^\lambda \right]\nonumber \\
\leq& 2(1+(1+\alpha)^2\|\zeta\|_\infty^2)^\lambda \nonumber\\
+& 2L_2C_3\lambda(66\lambda\varepsilon - \varepsilon + 1) \mathbb{E}\int_0^T  \sup_{0\le u\le s}\left( 1+ |G^{\varepsilon,\zeta}(u)|^2 \right)^{\lambda }  \, ds ,
\end{align*}
where $C_3=1+\frac{1}{1-\alpha}\|\zeta\|_\infty^2+\frac{(\alpha^2+\alpha+1)\|\zeta\|_\infty^2+L_2T}{(1-\alpha)^2}e^{\frac{2L_2T}{(1-\alpha)^2}}$. By the Gronwall inequality, it follows that
\[
\mathbb{E} \left[ \sup_{0 \leq t \leq T} (1+|G^{\epsilon}(t\wedge\zeta^{\epsilon,R})|^2)^\lambda \right]\le2(1+(1+\alpha)^2\|\zeta\|_\infty^2)^\lambda\exp{\{2L_2C_3\lambda(66\lambda\varepsilon - \varepsilon + 1)T\}}.
\]
Noting that
\[
(1+(1-\alpha)^2R-(1-\alpha)\|\zeta\|_\infty^2)^\lambda\mathbb{P}(\zeta^{\epsilon,R}\le T)\le\mathbb{E} \left[ \sup_{0 \leq t \leq T} (1+|G^{\epsilon,\zeta}(t)|^2)^\lambda \right],
\]
we have
\begin{align*}
  & \mathbb{P}(\sup_{-\tau\le t\le T}|Y^\epsilon(t)|>R)\le\mathbb{P}(\zeta^{\epsilon,R}\le T)\\
  \le&\frac{2(1+(1+\alpha)^2\|\zeta\|_\infty^2)^\lambda\exp{\{2L_2C_3\lambda(66\lambda\varepsilon - \varepsilon + 1)T\}}}{(1+(1-\alpha)^2R-(1-\alpha)\|\zeta\|_\infty^2)^\lambda}. 
\end{align*}
Choosing $\lambda=\frac{1}{\epsilon}$ yields
    \begin{equation*}
    \lim_{R\to\infty}\limsup_{\epsilon\to0}\epsilon\log\mathbb{P}(\sup_{-\tau\le t\le T}|Y^\epsilon(t)|>R)=-\infty.
    \end{equation*}
    The proof is therefore complete.
\end{proof}
For any \( R > 0 \), define the truncated coefficients
$$
b_R(\xi, \delta_{X^0_t}) = \chi_R(\xi)\, b(\xi, \delta_{X^0_t}),\quad \sigma_R(\xi, \delta_{X^0_t}) = \chi_R(\xi)\, \sigma(\xi, \delta_{X^0_t})
$$
where
$$
\chi_R(\xi) =
\begin{cases}
1, & \|\xi\|_\infty  \le R, \\[6pt]
R+1 - \|\xi\|_\infty , & R <\|\xi\|_\infty  < R+1, \\[6pt]
0, & \|\xi\|_\infty  \ge R+1.
\end{cases}
$$


\begin{remark}
    It is clear that both $b_R$ and $\sigma_R$ satisfy assumption (A3). We now proceed to show that they also satisfy assumption (A2). 
    \begin{align*}
        & 2 \langle \xi(0) - \eta(0) - (D(\xi) - D(\eta)), b_R(\xi,  \delta_{X^0_t}) - b_R(\eta,  \delta_{X^0_t}) \rangle\\
        =& \, 2 \chi_R(\xi) \langle \xi(0) - \eta(0) - (D(\xi) - D(\eta)), b(\xi,  \delta_{X^0_t}) - b(\eta,  \delta_{X^0_t}) \rangle\\
        +&2\,(\chi_R(\xi)-\chi_R(\eta)) \langle \xi(0)-\eta(0) - ( D(\xi)-D(\eta)),b(\eta,  \delta_{X^0_t}) \rangle\\
        \le& L\|\xi- \eta\|_\infty^2\\
        +&2\,(\chi_R(\xi)-\chi_R(\eta)) \langle \xi(0)-\eta(0) - ( D(\xi)-D(\eta)),b(\eta,  \delta_{X^0_t}) \rangle
     \end{align*}
    Note that $|\chi_R(\xi)-\chi_R(\eta)|\le\|\xi-\eta\|_\infty$. For the case that $\|\xi\|_\infty\vee\|\eta\|_\infty\ge R+1$, $\chi_R(\xi)-\chi_R(\eta)=0$, the second term of the above inequality vanishes. Hence it is sufficient to assume that $\|\xi\|_\infty<R+1$ or $\|\eta\|_\infty<R+1$. This together with (\ref{2.1}) indicates that $(\eta,  \delta_{X^0_t})$ is bounded in $\mathscr{C} \times \mathscr{P}_2(\mathscr{C})$. Hence, based on (A2), there exists a positive constant $L_R$ such that $|b(\eta,  \delta_{X^0_t})|\vee\||\sigma(\eta,  \delta_{X^0_t})\|_{HS}\le L_R$. These imply 
        \begin{align*}
        & 2 \langle \xi(0) - \eta(0) - (D(\xi) - D(\eta)), b_R(\xi,  \delta_{X^0_t}) - b_R(\eta,  \delta_{X^0_t}) \rangle\\
        \le & (L+2L_R(1+\alpha))\| \xi-\eta\|_\infty^2.  
        \end{align*} 
 By a similar argument, one can show that $\sigma_R$ also satisfies assumption (A2).
    \begin{align*}
         & \|\sigma_R(\xi, \delta_{X^0_t}) - \sigma_R(\eta, \delta_{X^0_t})\|_{\mathrm{HS}}^2 \\
         \le &2\chi_R(\xi)^2\|\sigma(\xi, \delta_{X^0_t}) - \sigma(\eta, \delta_{X^0_t})\|_{\mathrm{HS}}^2+2|\chi_R(\xi)-\chi_R(\eta)|^2\|\sigma_R(\sigma(\eta, \delta_{X^0_t})\|_{\mathrm{HS}}^2 \\
         \le&2(L+L_R^2)\|\xi- \eta\|_\infty^2.
    \end{align*}
\end{remark}
Consider   the solution $Y^{\epsilon, R}(t)$ of the following SDE:
\begin{equation}
        d(Y^{\epsilon,R}(t)-D(Y^{\epsilon,R}_t))=b_R(Y^{\epsilon,R}_t, \delta_{X^0_t})dt+\sqrt{\epsilon}\sigma_R(Y^{\epsilon,R}_t, \delta_{X^0_t})dW(t),\quad t\in[0,T],
\end{equation}
with $Y^{\epsilon,R}_0=\xi$. For \( \varphi \) with \( I(\varphi) < \infty \), let \( M^R(\varphi) \) be the solution to the following equation:
\begin{align*}
M^R(\varphi)(t) - D(M_{t}^{R}(\varphi)) &= M^R(\varphi)(0) - D(M_{0}^{R}(\varphi)) + \int_0^t b_R(M_{s}^{R}(\varphi),\delta_{X^0_s})\,ds \\
&+ \int_0^t \sigma_R(M_{s}^{R}(\varphi),\delta_{X^0_s})\,\dot{\varphi}(s)\,ds,
\end{align*}
with the initial condition $M_0^{R}(\varphi) = \xi$. Define the rate function
\[
I_R(f) := \inf \left\{ I(\varphi)\;\middle|\; M^R(\varphi) = f \right\}, \quad f \in C([-\tau, T]; \mathbb{R}^d).
\]
If $\sup_{-\tau \le t \le T} |M(\varphi)(t)| \le R$, then \( M(\varphi) = M^R(\varphi) \), and it follows that
\[
\widehat{I}(f) = I_R(f), \quad \text{for all } f \text{ such that } \sup_{-\tau \le t \le T} |f(t)| \le R.
\]
Applying the Lemma \ref{L3.3}, we conclude that the law of $Y^{\varepsilon,R}(\cdot)$, denoted by $\mu^{\epsilon,R}$, satisfies the LDP with the rate function $I_R(f)$. The following lemma establishes that the processes $Y^{\varepsilon,R}(t)$ are exponentially good approximation of $Y^{\varepsilon}(t)$.
\begin{lemma}
    Suppose that Assumptions (A1)-(A3) are satisfied. Then for any $\delta>0$, the following holds
    \begin{equation}
        \lim_{R\to\infty}\limsup_{\epsilon\to0}\epsilon\log\mathbb{P}(\sup_{-\tau\le t\le T}|Y^\epsilon(t)-Y^{\epsilon,R}(t)
        |>\delta)=-\infty  
\end{equation}
\end{lemma}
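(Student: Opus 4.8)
The key observation is that $Y^{\varepsilon,R}(t)$ and $Y^\varepsilon(t)$ solve equations with identical coefficients as long as the segment process stays inside the ball of radius $R$, because $\chi_R \equiv 1$ there. So the plan is to localize via a stopping time. Define $\zeta^{\varepsilon,R} := \inf\{t \ge 0 : |Y^\varepsilon(t)| > R\}$ (the same stopping time used in Lemma \ref{L3.4}) and, say, $\widetilde{\zeta}^{\varepsilon,R} := \inf\{t \ge 0 : |Y^{\varepsilon,R}(t)| > R\}$. On the event $\{\zeta^{\varepsilon,R} > T\}$ one has $\sup_{-\tau \le t \le T} |Y^\varepsilon(t)| \le R$, hence $\|Y^\varepsilon_t\|_\infty \le R$ for all $t \le T$ and similarly $\chi_R(Y^\varepsilon_t) = 1$, so $Y^{\varepsilon,R}(\cdot)$ and $Y^\varepsilon(\cdot)$ satisfy the same SDE with the same initial segment $\xi$ on $[0,T]$; by pathwise uniqueness they coincide on $[-\tau,T]$. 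Therefore
\[
\mathbb{P}\Big(\sup_{-\tau \le t \le T} |Y^\varepsilon(t) - Y^{\varepsilon,R}(t)| > \delta\Big)
\le \mathbb{P}(\zeta^{\varepsilon,R} \le T).
\]

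Once this reduction is in place, the result is immediate: Lemma \ref{L3.4} states exactly that
\[
\lim_{R \to \infty} \limsup_{\varepsilon \to 0} \varepsilon \log \mathbb{P}\Big(\sup_{-\tau \le t \le T} |Y^\varepsilon(t)| > R\Big) = -\infty,
\]
and the right-hand side above is $\mathbb{P}(\sup_{-\tau \le t \le T}|Y^\varepsilon(t)| > R)$. Combining the two displays yields
\[
\lim_{R \to \infty} \limsup_{\varepsilon \to 0} \varepsilon \log \mathbb{P}\Big(\sup_{-\tau \le t \le T} |Y^\varepsilon(t) - Y^{\varepsilon,R}(t)| > \delta\Big) = -\infty,
\]
which is the claim. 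Note $\delta$ plays no role beyond ensuring the event is nontrivial, so the bound is in fact uniform in $\delta > 0$.

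The only point requiring a little care — and the part I expect to be the main obstacle, though it is minor — is justifying the pathwise-coincidence step rigorously. One must check that the truncated coefficients $b_R, \sigma_R$ indeed satisfy the hypotheses guaranteeing strong existence and uniqueness (this is done in the preceding Remark, where $b_R, \sigma_R$ are shown to satisfy (A2) and (A3)), and that the neutral term $D$ poses no extra difficulty for uniqueness — which follows from (A1) with $\alpha \in (0,1)$ via the standard estimate $\sup_{0 \le t \le T}\|X_t - Y_t\|_\infty \le (1-\alpha)^{-1}\sup_{0 \le t \le T}|(X(t)-D(X_t)) - (Y(t)-D(Y_t))|$ already exploited repeatedly above (e.g. in \eqref{2.3}). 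A clean alternative that avoids invoking uniqueness explicitly is to run a Gronwall argument on $\bar{Z}^{\varepsilon,R}(t) := (Y^\varepsilon - Y^{\varepsilon,R})(t) - (D(Y^\varepsilon_t) - D(Y^{\varepsilon,R}_t))$ stopped at $\zeta^{\varepsilon,R} \wedge \widetilde{\zeta}^{\varepsilon,R}$, using (A2) (the difference of coefficients vanishes up to this time since both arguments have sup-norm $\le R$ there) to conclude $Y^\varepsilon = Y^{\varepsilon,R}$ on $[0, \zeta^{\varepsilon,R} \wedge \widetilde{\zeta}^{\varepsilon,R}]$, whence $\widetilde{\zeta}^{\varepsilon,R} = \zeta^{\varepsilon,R}$ and the coincidence extends to $[0,T]$ on $\{\zeta^{\varepsilon,R} > T\}$. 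Either way the analytic content is entirely contained in Lemma \ref{L3.4}.
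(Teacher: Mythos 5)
Your proof is correct and, more interestingly, genuinely different from the one in the paper. The paper decomposes
\[
\mathbb{P}\Big(\sup_{0\le t\le T}|Z^{\varepsilon,R}(t)|>\delta\Big)\le
\mathbb{P}(\zeta^\varepsilon(R)\le T)+\mathbb{P}(\widehat{\zeta}^\varepsilon(\delta)\le T),
\]
where $\widehat{\zeta}^\varepsilon(\delta)=\inf\{t\ge0:|Z^{\varepsilon,R}(t\wedge\zeta^\varepsilon(R))|\ge\delta\}$, and then estimates the second term by applying It\^o's formula to $(\rho^2+|G^{\varepsilon,R}|^2)^{1/\varepsilon}$ and running a Gronwall argument as in Lemma \ref{L3.1}, yielding an exponentially small bound. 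You instead observe that the second term is simply zero: once $R>\|\xi\|_\infty$, the truncated coefficients $b_R,\sigma_R$ agree exactly with $b,\sigma$ on segments of sup-norm $\le R$, so $Y^\varepsilon$ and $Y^{\varepsilon,R}$ are indistinguishable up to $\zeta^{\varepsilon,R}\wedge\widetilde{\zeta}^{\varepsilon,R}$ by pathwise uniqueness, and hence (identifying the two exit times) on all of $[0,T]$ on the event $\{\zeta^{\varepsilon,R}>T\}$. This collapses the proof to a one-line appeal to Lemma \ref{L3.4}, with no It\^o computation needed. Your route is shorter and exposes what the paper's Gronwall estimate secretly encodes anyway (take $\rho\to0$ in the paper's bound and the second term also vanishes); the paper's route avoids the need to argue carefully that $\widetilde{\zeta}^{\varepsilon,R}=\zeta^{\varepsilon,R}$, which is the one step you rightly flag as delicate and which is cleanest to justify via the very localized Gronwall argument you sketch at the end (up to $\zeta^{\varepsilon,R}\wedge\widetilde{\zeta}^{\varepsilon,R}$ both processes satisfy the same equation, so they agree there; agreement at that time plus continuity then forces the two exit times to coincide). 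Two cosmetic remarks: you should note explicitly that the reduction requires $R>\|\xi\|_\infty$ so that the initial segments lie in the ball --- harmless since $R\to\infty$; and since the coefficients only agree on the closed ball $\{\|\cdot\|_\infty\le R\}$, the stopping time should be phrased via the segment norm $\|Y^{\varepsilon,R}_t\|_\infty$ rather than the pointwise norm $|Y^{\varepsilon,R}(t)|$ (this is the same thing once the initial segment is inside the ball, but it makes the agreement of coefficients immediate).
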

\begin{proof}
    Let $Z^{\epsilon,R}(t)=Y^{\epsilon}(t)-Y^{\epsilon,R}(t)$, and define the auxiliary process $G^{\epsilon.R}(t)=Y^{\epsilon}(t)-Y^{\epsilon,R}(t)-(D(Y^\epsilon_t)-D(Y^{\epsilon,R}_t))$. For any $R>0$, define the stopping times $\zeta^{\epsilon}(R)=\inf\{t\ge0: |Y^{\epsilon}(t)|\ge R\}$, and $\widehat{\zeta}^{\epsilon}(\delta)=\inf\{t\ge0: |Z^{\epsilon,R}(t\wedge\zeta^{\epsilon}(R))|\ge \delta\}$. Then we can estimate
        \begin{align}\label{3.15}
        &\mathbb{P}(\sup_{0\le t\le T}|Z^{\epsilon,R}(t)
        |>\delta)\nonumber\\
        =&\mathbb{P}(\sup_{0\le t\le T}|Z^{\epsilon,R}(t)
        |>\delta, \zeta^{\epsilon}(R)\le T)+\mathbb{P}(\sup_{0\le t\le T}|Z^{\epsilon,R}(t)
        |>\delta, \zeta^{\epsilon}(R)> T)\nonumber\\
        \le &\mathbb{P}(\zeta^{\epsilon}(R)\le T)+\mathbb{P}(\widehat{\zeta}^{\epsilon}(\delta)\le T)\nonumber\\
        \le &\mathbb{P}(\sup_{0\le t \le T}|Y^\epsilon(t)|\ge R)+\mathbb{P}(\widehat{\zeta}^{\epsilon}(\delta)\le T).
    \end{align}
   To estimate the second term, we apply an argument similar to that used in Lemma \ref{L3.1}, which implies that for any $\rho>0$ 
    \[
    \mathbb{E} \left[ \sup_{0 \leq t \leq T} (\rho^2+|G^{\epsilon,R}(t\wedge\zeta^{\epsilon}(R)\wedge\hat{\zeta}^{\epsilon}(\delta))|^2)^\frac{1}{\epsilon} \right]
   \leq 2 \rho^{2/\varepsilon} e^{C T / \varepsilon},
    \]
for some constant \( C > 0 \) independent of $\epsilon$. Hence,
\[
    \mathbb{P} \left( \widehat{\zeta}^{\epsilon}(\delta) \leq T \right)
   \leq \left( \frac{2^\epsilon\rho^2}{\rho^2 + (1 - \alpha)^2 \delta^2} \right)^{1/\varepsilon} e^{CT / \varepsilon}.
\]
Taking logarithm and upper limit as \( \varepsilon \to 0 \), we obtain
\begin{equation}\label{3.16}
 \limsup_{\varepsilon \to 0} \varepsilon \log \mathbb{P} \left( \widehat{\zeta}^{\epsilon}(\delta) \leq T \right)
\leq \log \left( \frac{\rho^2}{\rho^2 + (1 - \alpha)^2 \delta^2} \right) + CT.   
\end{equation}
Combining estimates (\ref{3.15}) and (\ref{3.16}) yields that
\begin{align}
    &\lim_{R\to\infty}\limsup_{\epsilon\to0}\epsilon\log\mathbb{P}(\sup_{-\tau\le t\le T}|Z^{\epsilon,R}(t)|>\delta)\nonumber\\
    \le&\big(\limsup_{\epsilon\to0}\epsilon\log\mathbb{P}(\sup_{0\le t \le T}|Y^\epsilon(t)|\ge R)\big)\vee \Big\{\log \left( \frac{\rho^2}{\rho^2 + (1 - \alpha)^2 \delta^2} \right) + CT\Big\},
\end{align}
Finally, letting $\rho\to0$ and then $R\to\infty$, and using Lemma \ref{L3.4}, we obtain the desired conclusion.
\end{proof}

\begin{proof}[Proof of (i) of Theorem 2.1.] 
   It is sufficient to prove the upper bound condition (\ref{3.17}) and the lower bound condition (\ref{3.16}) stated below according to the definition of LDP.
    \begin{equation}\label{3.17}
    \limsup_{\varepsilon \to 0} \varepsilon \log \mu_\varepsilon(C) \le - \inf_{f \in C} \widehat{I}(f),\quad \forall \quad\text{closed subset} \quad C \subset C([-\tau, T]; \mathbb{R}^d),
    \end{equation}
    \begin{equation}\label{3.18}
    \liminf_{\varepsilon \to 0} \varepsilon \log \mu_\varepsilon(G) \ge - \inf_{f \in G}\widehat{I}(f),\quad \forall \quad\text{open subset} \quad G \subset C([-\tau, T]; \mathbb{R}^d),
    \end{equation}
   where $\mu_\epsilon$ is the law of $Y^\epsilon(\cdot)$ on $C([-\tau, T]; \mathbb{R}^d)$. For \( R > 0 \) and a closed subset \( C \subset C([-\tau, T]; \mathbb{R}^d) \), define $C_R := C \cap \left\{ f:\|f\|_\infty \le R \right\}$. Let \( C_R^\delta \) denote the closed \( \delta \)-neighborhood of \( C_R \). Then we have the following estimate
\begin{align*}
\mu^\varepsilon(C) 
&\le \mu^\varepsilon(C_{R_1}) + \mathbb{P} \left( \sup_{-\tau \le t \le T} |Y^\varepsilon(t)| > R_1 \right) \\
&\le \mathbb{P} \left( \sup_{-\tau \le t \le T} |Y^\varepsilon(t) - Y^{\varepsilon, R}(t)| > \delta \right)
+ \mu^{\varepsilon, R} \left( C_{R_1}^\delta \right) 
+ \mathbb{P} \left( \sup_{-\tau \le t \le T} |Y^\varepsilon(t)| > R_1 \right), 
\end{align*}
where $\mu^{\epsilon,R}$ is the law of $Y^{\epsilon,R}(\cdot)$ on $C([-\tau, T]$. Taking the LDP of $\mu^{\epsilon,R}$ into consideration, we obtain
\begin{align*}
\limsup_{\varepsilon \to 0} \varepsilon \log \mu^\varepsilon(C)
&\le \left( - \inf_{f \in C_{R_1}^\delta} I_R(f) \right)
\vee \left( \limsup_{\varepsilon \to 0} \varepsilon \log \mathbb{P} \left( \sup_{-\tau \le t \le T} |Y^\varepsilon(t)| > R_1 \right) \right) \\
&\quad \vee \left( \limsup_{\varepsilon \to 0} \varepsilon \log \mathbb{P} \left( \sup_{-\tau \le t \le T} |Y^\varepsilon(t) - Y^{\varepsilon, R}(t)| > \delta \right) \right).
\end{align*}
By letting $R\to\infty$, $\delta\to\infty$ and $R_1\to\infty$ successively in the inequality above, we obtain the upper bound (\ref{3.17}).
Let \( G \) be an open subset of \( C([-\tau, T]; \mathbb{R}^d) \). For any \(\varphi_0 \in G\) and \(\delta > 0\), define $B(\varphi_0, \delta) = \{ f : \|f - \varphi_0\|_\infty \leq \delta \} \subset G.$
By applying the LDP for $\mu^{\varepsilon,R}$, we have
\begin{align*}
- I_R(\varphi_0) &\leq \liminf_{\varepsilon \to 0} \varepsilon \log \mu^{\varepsilon,R} \big( B(\varphi_0, \frac{\delta}{2}) \big)\\
&= \liminf_{\varepsilon \to 0} \varepsilon \log
\mathbb{P}\Big(\sup_{-\tau \le t \le T} |Y^{\varepsilon,R}(t) - \varphi_0| \le\frac{\delta}{2} \Big) \\
&\leq \liminf_{\varepsilon \to 0} \varepsilon \log \mu^\varepsilon(G) \ \vee \ \liminf_{\varepsilon \to 0} \varepsilon \log \mathbb{P} \left( \sup_{-\tau \le t \le T} |Y^\varepsilon(t) - Y^{\varepsilon,R}(t)| > \delta/2 \right)
\end{align*}
Note that \( I_R(\varphi_0) = \widehat{I}(\varphi_0) \) provided that \(\|\varphi_0\|_\infty \leq R\). Hence, letting \( R \to \infty \), we obtain
\[
- \widehat{I}(\varphi_0) \leq \liminf_{\varepsilon \to 0} \varepsilon \log \mu^\varepsilon(G).
\]
Since \(\varphi_0 \in G\) is arbitrary, it follows that
\[
- \inf_{f \in G} \widehat{I}(f) \leq \liminf_{\varepsilon \to 0} \varepsilon \log \mu^\varepsilon(G),
\]
which establishes the lower bound (\ref{3.18}) in Theorem 3.1. The proof is therefore complete.
\end{proof}

\section{Proof of (ii) of Theorem 2.1 }
Finally, to complete the argument, we establish the exponential equivalence between \(Y^\varepsilon(t)\)and \(X^\varepsilon(t)\), which leads to the central result, namely the validity of Theorem \ref{T2.1}(ii).

\begin{proof}[Proof of (ii) of Theorem 2.1.] It is sufficient to prove that the processes $X^{\varepsilon}(t)$ and $Y^{\varepsilon}(t)$ are exponentially equivalent, i.e. for any $\delta>0$, the following holds
    \begin{equation}
        \limsup_{\epsilon\to0}\epsilon\log\mathbb{P}(\sup_{-\tau\le t\le T}|X^\epsilon(t)-Y^{\epsilon}(t)
        |>\delta)=-\infty .
\end{equation}
Let us denote the difference between the two processes by $F^{\epsilon}(t)=X^{\epsilon}(t)-Y^{\epsilon}(t)$, and define the auxiliary process $H^{\epsilon}(t)=X^{\epsilon}(t)-Y^{\epsilon}(t)-(D(X^\epsilon_t)-D(Y^{\epsilon}_t))$. For any $M>0$, define the stopping time $\gamma^\epsilon(M)=\inf\{t\ge0: |Y^{\epsilon}(t)|\ge M\}$. Additionally, define another stopping time $\widetilde{\gamma}^\epsilon(\delta)=\inf\{t\ge0:|F^{\epsilon}(t\wedge \gamma^\epsilon(M))|\ge \delta\}$. By an argument similar to that used in (\ref{3.15})
    \begin{align}\label{3.20}
        &\mathbb{P}(\sup_{0\le t\le T}|F^{\epsilon}(t)
        |>\delta)\nonumber\\
        =&\mathbb{P}(\sup_{0\le t\le T}|F^{\epsilon}(t)
        |>\delta, \gamma^\epsilon(M)\le T)+\mathbb{P}(\sup_{0\le t\le T}|F^{\epsilon}(t)
        |>\delta, \gamma^\epsilon(M)> T)\nonumber\\
        \le &\mathbb{P}(\gamma^\epsilon(M)\le T)+\mathbb{P}(\widetilde{\gamma}^\epsilon(\delta)\le T)\nonumber\\
        \le &\mathbb{P}(\sup_{0\le t \le T}|Y^\epsilon(t)|\ge M)+\mathbb{P}(\widetilde{\gamma}^\epsilon(\delta)\le T).
    \end{align}
    As in the proof of Lemma \ref{L3.1} , by Itô's formula, for any $\rho>0$ and $\lambda>0$
    \begin{align}   
&(\rho^2+|H^{\epsilon}(t)|^2)^\lambda \nonumber\\
\le& \rho^{2\lambda}
+ L\lambda(2\lambda\epsilon-\epsilon+2)\int_0^{t}  (\rho^2 + |H^{\varepsilon}(s)|^2)^{\lambda - 1} (\| F_s^{\varepsilon} \|_\infty^2 
   +\mathbb{E}\|X^\epsilon_s-X^0_s\|_\infty^2) \, ds+ \widetilde{\Phi}(t),
\end{align}
where $\widetilde{\Phi}(t) := 2\lambda\sqrt{\epsilon} \int_0^{t} 
(\rho^2 + |H^{\varepsilon}(s)|^2)^{\lambda - 1} 
\langle H^{\varepsilon}(s), \sigma(X_s^{\varepsilon},\mathscr{L}_{X^\epsilon_s}) - \sigma(Y_s^{\varepsilon},\delta_{X^0_s}) dW(s) \rangle$
is a martingale. Hence, applying the BDG inequality yields
\begin{align*}
&\mathbb{E} \left[ \sup_{0 \leq t \leq T} \widetilde{\Phi}(t)\right]\\
\leq& \frac{1}{2} \mathbb{E} \left[ \sup_{0 \leq t \leq T} (\rho^2 + |H^{\varepsilon}(t)|^2)^{\lambda} \right] \\
+&64 L \lambda^2 \varepsilon \, \mathbb{E} \left[ \int_0^{T} (\rho^2 + |H^{\varepsilon}(s)|^2)^{\lambda - 1} (\| F_s^{\varepsilon} \|_\infty^2+\mathbb{E}\| X^\epsilon_s-X^0_s \|_\infty^2) \, ds \right].
\end{align*}
This, together with (\ref{2.2}), yields
\begin{align}
    &\mathbb{E} \left[ \sup_{0 \leq t \leq T} (\rho^2+|H^{\epsilon}(t\wedge\gamma^\epsilon(M)\wedge\widetilde{\gamma}^\epsilon(\delta))|^2)^\lambda  \right]\nonumber\\
\le&2\rho^{2\lambda}+C_4\int_0^{T\wedge\gamma^\epsilon(M)\wedge\widetilde{\gamma}^\epsilon(\delta)}\mathbb{E} \left[ \sup_{0 \leq u \leq s} (\rho^2+|H^{\epsilon}(u)|^2)^\lambda \right]\,ds,
\end{align}
where $C_4=\frac{2L\lambda(66\lambda\epsilon-\epsilon+2)L_4(\epsilon)}{\rho^2(1-\alpha)^2}$. Taking $\lambda=\frac{1}{\epsilon}$ and $\rho=\sqrt{\epsilon}$, 
by the Gronwall inequality, we obtain
\[
\mathbb{E} \left[ \sup_{0 \leq t \leq T} (\epsilon+|H^{\epsilon}(t\wedge\gamma^\epsilon(M)\wedge\widetilde{\gamma}^\epsilon(\delta))|^2)^\lambda  \right]
\leq 2 \epsilon^{1/\varepsilon} e^{C_5\varepsilon},
\]
where $C_5:=  \frac{17680LL_2T^2(1+2L_3(1))}{(1 - \alpha)^2}\exp\{\frac{4L}{(1-\alpha)^2}\} $. Hence
\[
\limsup_{\varepsilon \to 0} \varepsilon \log \mathbb{P} \left( \widetilde{\gamma}^\epsilon(\delta) \leq T \right)
\leq\limsup_{\varepsilon \to 0} \log \left( \frac{2^\epsilon\epsilon}{\epsilon + (1 - \alpha)^2 \delta^2} \right) + C_5.
\]
Substituting this into (\ref{3.20}), we have
\begin{align}
    &\limsup_{\epsilon\to0}\epsilon\log\mathbb{P}(\sup_{-\tau\le t\le T}|F^{\epsilon}(t)|>\delta)\nonumber\\
    \le&\big(\limsup_{\epsilon\to0}\epsilon\log\mathbb{P}(\sup_{0\le t \le T}|Y^\epsilon(t)|\ge M)\big)\vee \Big\{\limsup_{\epsilon\to0}\log \left( \frac{2^\epsilon\epsilon}{\epsilon + (1 - \alpha)^2 \delta^2} \right) + C_5\Big\},
\end{align}
Finally, the required assertion follows by letting $M\to\infty$ and using Lemma \ref{L3.4}.
\end{proof}

\section{Acknowledgements}
The first author was fully supported by a studentship from the Heilbronn Institute for Mathematical Research, UKRI Grant EP/V521917/1.

\end{document}